\title{A dynamical definition of f.g. virtually free groups}
\author{Daniel Bennett and Collin Bleak}
\newtheorem{theorem}{Theorem}
\newtheorem{conjecture}[theorem]{Conjecture}
\newtheorem{corollary}[theorem]{Corollary}
\newtheorem{lemma}[theorem]{Lemma}
\newtheorem{definition}[theorem]{Definition}
\newtheorem{prop}[theorem]{Proposition}
\newtheorem{cor}[theorem]{Corollary}
\newcommand{\cocf}{co\mathcal{CF}}
\newcommand{\cc}{\mathfrak{C}_2}
\newcommand{\Z}{\mathbb{Z}}
\newcommand{\Q}{\mathbb{Q}}
\newcommand{\seteq}{:=}
\begin{document}

\maketitle
\abstract{
We show that the class of finitely generated virtually free groups is precisely the class of demonstrable subgroups for R. Thompson's group $V$.   The class of demonstrable groups for $V$ consists of all groups which can embed into $V$ with a natural dynamical behaviour in their induced actions on the Cantor space $\cc \seteq \left\{0,1\right\}^\omega$.  There are also connections with formal language theory, as the class of groups with context-free word problem is also the class of finitely generated virtually free groups, while R. Thompson's group $V$ is a candidate as a universal $\cocf$ group by Lehnert's conjecture, corresponding to the class of groups with context free co-word problem (as introduced by Holt, Rees, R\"over, and Thomas).    Our main result answers a question of Berns-Zieze, Fry, Gillings, Hoganson, and Matthews, and separately of Bleak and Salazar-D\'iaz, and it fits into the larger exploration of the class of $\cocf$ groups as it shows that all four of the known closure properties of the class of $\cocf$ groups hold for the set of finitely generated subgroups of $V.$}
\section{Introduction}
There is a long history  of determining classes of groups based on some external criteria.  In the case of formal language theory, we mention in particular two classical results along these lines.  Anisimov's theorem \cite{anisimov} that the class of finitely generated groups with \emph{word problem} a regular language is precisely the class of finite groups, and Muller and Schupp's result \cite{mullerschupp83, mullerschupp85} that the class of finitely generated groups with word problem a context free language is precisely the class of finitely generated virtually free groups.

Holt, Rees, R\"over, and Thomas introduce in \cite{HRRT} the class $\cocf$ of co-context free groups.  The $\cocf$ groups are a natural generalisation of the class of groups with context free word problem, and so one can hope for another classification of these groups as some particular natural class of groups.  Presently, there is a conjecture of Lehnert \cite{lehnertschweitzer07,BMN13} that the class $\cocf$ is precisely the class of finitely generated groups that can embed in R. Thompson's group $V$.

The R. Thompson groups $F< T< V$ have been known to be important essentially since their introduction in the mid 1960's by Richard Thompson \cite{cfp,thompson}.  However, while they are fundamental objects, they remain quite mysterious in many ways.  Specific to $V$, we can highlight various investigations \cite{birget2004,lehnertschweitzer07,fryetal,olgaConjugacy,centralisersVn,collinolga,dehornoy}, which list is by no means comprehensive.  

One view of the group $V$, amongst others,  is as a group of homeomorphisms of the standard ``deleted middle thirds'' Cantor space $\cc\seteq \left\{0,1\right\}^\omega\!\!.$   Recently, it has become clear \cite{collinmartyn} that yet another view of $V$ is natural and interesting.  In this view, $V$ can be thought of as an infinite generalisation of the finite alternating groups (our most basic examples of finite simple groups).  From this view, $V$ can be seen as a permutation group of ``even'' permutations ``acting'' on a poset structure of some partitions of Cantor space.  This, perhaps, offers a new insight into why $V$ arose as one of the first examples of finitely presented infinite simple groups.

In any case, in \cite{collinolga} a class of subgroups of $V$ were singled out as being particularly important.  These are subgroups of $V$ with induced actions on Cantor space that are naturally ``Geometric''; the \emph{demonstrative subgroups} of $V$.  Groups (in general) which admit embeddings into $V$ as demonstrative subgroups of $V$ are known as \emph{demonstrable groups for $V$}.   We note in passing that Question 3 of \cite{collinolga} requests a classification of these groups, which we will answer here in the finitely generated case.

Thus, the main result of this paper from the point of view of the development of the dynamic theory of $V$ is the following.
\begin{theorem}\label{thm:mainTheorem}Let $G$ be a finitely generated group.
Then, $G$ is a demonstrable group for $V$ if and only if $G$ is a virtually free group.
\end{theorem}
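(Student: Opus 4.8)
The plan is to prove the two implications separately, using the word problem as a bridge and invoking the theorem of Muller and Schupp that a finitely generated group is virtually free precisely when its word problem is a context free language. Concretely, I will show that a finitely generated $G$ is demonstrable for $V$ if and only if $G$ has context free word problem; combined with Muller--Schupp this yields the statement. The whole argument rests on a single dictionary: a demonstrative action encodes group elements as an antichain of addresses in the infinite binary tree $\left\{0,1\right\}^*$ (the vertices indexing the cones of $\cc$), and this antichain plays exactly the role of the stack of a pushdown automaton.

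\textbf{Converse direction (demonstrable $\Rightarrow$ virtually free).} Suppose $H \le V$ is a demonstrative copy of $G$, witnessed by a clopen ``cone'' $U = u\cc$ whose $H$-translates $\left\{hU : h \in H\right\}$ are pairwise disjoint. After refining $U$ (passing to a deep enough subcone contained in a single domain leaf of each generator), I may assume each generator and its inverse acts on the relevant part of $\cc$ by a fixed finite prefix-replacement table, so that on addresses the action $w \mapsto w'$ is a bounded-depth rewrite: strip a bounded prefix and prepend another. The map $h \mapsto w_h$, sending $h$ to the address of $hU$, is injective, since disjointness forces $hU = h'U$ to imply $h = h'$; in particular $h$ represents the identity iff $w_h = w_e$. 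I then build a pushdown automaton recognising the word problem of $G$: reading a word $s_1\cdots s_n$ in the generators, it maintains the address $w_{s_1\cdots s_k}$ on its stack, updating by the bounded prefix-rewrite at each letter, and accepts iff the final stack content equals $w_e$. Unbounded depth is absorbed by the stack, so the language is context free, and Muller--Schupp gives that $G$ is virtually free. The technical point to dispatch is that $hU$ need not be a single cone; this is handled by the refinement above together with tracking a canonical cone address throughout.

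\textbf{Forward direction (virtually free $\Rightarrow$ demonstrable).} Here I traverse the dictionary in the other direction. By Muller--Schupp the virtually free group $G$ has context free, indeed deterministic context free, word problem, so it is recognised by a pushdown automaton whose stack alphabet I encode inside $\left\{0,1\right\}^*$. I realise $G$ as homeomorphisms of $\cc$ by letting each generator act on stack-addresses by its push/pop rule, a finite prefix-replacement and hence an element of $V$; distinct group elements then receive distinct addresses forming an antichain, so a suitable cone $U$ has pairwise disjoint translates and the action is demonstrative. Equivalently, and more geometrically, I may build the action from a Bass--Serre tree $T$ for a decomposition of $G$ as the fundamental group of a finite graph of finite groups: finiteness of cone types of this cocompact action makes the boundary action on $\partial T \cong \cc$ a prefix-replacement action inside $V$, and a half-tree whose $G$-translates are pairwise nested or disjoint supplies the demonstrative cone.

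\textbf{Main obstacle.} I expect the crux to be the forward construction, namely turning the abstract pushdown or tree structure into an honest demonstrative embedding. Two requirements must be controlled at once: the map must be a genuine injective homomorphism into $V$, so the stack rewrites must be reversible and assembled into bona fide prefix-replacement homeomorphisms rather than a mere transducer; and the orbit $\left\{gU : g \in G\right\}$ must be totally disjoint, which is delicate exactly when $G$ has torsion, since each finite subgroup must permute finitely many cones freely while the ambient structure pushes the cone off itself. The geometric half-tree ping-pong viewpoint is what makes this manageable, and verifying \emph{full} demonstrativeness there, rather than mere faithfulness, is the step on which I would spend the most care.
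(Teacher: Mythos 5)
Your converse direction (demonstrable $\Rightarrow$ virtually free) is essentially the paper's own argument: load the address of the demonstration cone onto a pushdown stack, update it by each generator's prefix-replacement rules, accept exactly when the stack returns to its initial content, and finish with Muller--Schupp; that half is sound (modulo the same cone-tracking technicality the paper also treats lightly). The genuine gap is in the forward direction, and it sits exactly at the step you yourself call the main obstacle, which is flagged but never dispatched. The geometric route cannot work as stated: for a non-elementary finitely generated virtually free group $G$ acting cocompactly on a Bass--Serre tree $T$, the induced action on $\partial T$ is \emph{minimal} (the limit set is all of $\partial T$), and a minimal action of an infinite group on a perfect compact space admits no demonstration set at all --- if $U$ is open and $x\in U$, density of the orbit of $x$ produces $g\neq 1$ with $xg\in U\setminus\{x\}$, whence $U\cap Ug\neq\emptyset$. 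Your phrase ``pairwise nested or disjoint'' is precisely the problem: demonstrativeness forbids nesting, yet for any half-tree $U$ and any hyperbolic element $g$ whose axis crosses the defining edge of $U$ in the inward direction, one has $Ug\subsetneq U$. So no half-tree of $\partial T$ can serve; the demonstrative action must live on a Cantor space strictly ``larger'' than the boundary dynamics, with a wandering cone set aside.

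The automaton route fails for a related reason: a PDA transition is state-dependent and is not even a well-defined bijection on stack contents, so ``letting each generator act by its push/pop rule'' does not define a homeomorphism of the Cantor space, much less an injective homomorphism into $V$ possessing a wandering cone. You acknowledge this (``rather than a mere transducer'') but give no construction, and none is routine --- this direction is the substantive content of the theorem. For comparison, the paper's forward direction is dynamical rather than language-theoretic: it writes down explicit elements $a,b\in V$ of orders $2$ and $3$, runs ping-pong on the cones $[10]$ and $[111]$ to get $\langle a,b\rangle\cong C_2\ast C_3$, and then shows by induction on normal forms that the separate cone $[0]$ satisfies $[0]\cap[0]g=\emptyset$ for every $g\neq 1$, the point being that $[0]$ is disjoint from both ping-pong sets and every nontrivial reduced word pushes it into one of them. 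Demonstrativeness then passes to the subgroup $F_2\cong\langle [a,b],[a,b^{-1}]\rangle$ (subgroups inherit the same demonstration node), every countable free group embeds in $F_2$, and demonstrability is closed under finite-index overgroups (Berns-Zieve et al.), which yields all countable virtually free groups. Note that Muller--Schupp is needed only in the converse direction; to repair your proposal you need an action with a wandering cone, not a recycling of the boundary action or of the word-problem automaton.
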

In particular, as we noted before, this is precisely the class of context free groups.  

In passing, we comment that $\Q\backslash\Z$ is a known demonstrable subgroup for $V$ (and even, for $T$, see \cite{BKM}), so we need the ``finitely generated'' criterion.

As mentioned above, Lehnert in his dissertation \cite{LehnertDissertation} conjectures that a group $QAut(\mathscr{T}_{2,c})$ is a universal $\cocf$ group; that a group is a $\cocf$ group if and only if it embeds as a finitely generated subgroup of $QAut(\mathscr{T}_{2,c})$.  Lehnert also shows that $V$ embeds in $QAut(\mathscr{T}_{2,c})$.  Lehnert and Schweitzer show by a separate argument in \cite{lehnertschweitzer07} that $V$ is a $\cocf$ group.  In response to the work in \cite{collinolga}, Lehnert and Schweitzer \cite{lehnertSchweitzerRequest} asked those authors whether $V$ and $QAut(\mathscr{T}_{2,c})$ are bi-embeddable.  In \cite{BMN13}, this question is answered affirmatively.  Consequently, one can restate Lehnert's conjecture as the following.

\begin{conjecture}[Lehnert] R. Thompson's group $V$ is a universal $\cocf$ group.
\end{conjecture}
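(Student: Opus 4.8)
The plan is to establish the two directions of the equivalence separately, the first being routine and known, the second being the genuine content. One direction is already in hand: by the theorem of Lehnert and Schweitzer \cite{lehnertschweitzer07} that $V$ is a $\cocf$ group, together with the closure of the class $\cocf$ under passage to finitely generated subgroups \cite{HRRT}, every finitely generated subgroup of $V$ is a $\cocf$ group. So the whole difficulty lies in the converse: to show that an arbitrary $\cocf$ group $G$ embeds as a finitely generated subgroup of $V$. By the bi-embeddability of $V$ and $QAut(\mathscr{T}_{2,c})$ established in \cite{BMN13}, it is equivalent, and I expect technically more convenient, to embed $G$ into $QAut(\mathscr{T}_{2,c})$, which is precisely the target Lehnert designed for this purpose.

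My approach to the hard direction is to convert the automaton witnessing the co-word problem into a geometric action on the boundary of the rooted binary tree, that is, on $\cc$. Fix a finite generating set $S$ of $G$, so that the co-word problem of $(G,S)$ is accepted by a nondeterministic pushdown automaton $M$. The first step is to normalise $M$ and invoke the Chomsky--Sch\"utzenberger representation, writing the co-word problem as $h(D_k \cap R)$ for a Dyck language $D_k$ on $k$ bracket pairs, a regular language $R$, and an alphabet homomorphism $h$. The point of this factorisation is that $D_k$ carries exactly the self-similar, tree-like structure of a rooted-tree boundary, while $R$ contributes only a finite-state (rational) overlay of the kind already realised inside $V$ by prefix-replacement homeomorphisms. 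The second step is to use the stack configurations of $M$ --- equivalently, the partial bracket words appearing in a $D_k$-derivation --- as the addresses of a Cantor space on which $G$ acts, and to verify that left multiplication by each generator in $S$ alters a configuration only by a bounded surgery near the top of the stack together with a finite-state relabelling, so that the induced map is a quasi-automorphism of $\mathscr{T}_{2,c}$. The third step, faithfulness, should then follow, since by construction the action encodes the co-word problem and hence separates every nontrivial element of $G$ from the identity.

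The hard part will be the second step: converting a context-free \emph{acceptance condition} into a genuine tree action by bounded-support quasi-automorphisms. The obstruction is precisely the gap between context-free and rational behaviour. Elements of $V$, and the quasi-automorphisms comprising $QAut(\mathscr{T}_{2,c})$, are finite-state objects: each replaces only finitely many prefixes and is governed by a finite automaton, whereas a pushdown automaton may consult an unbounded stack. One must therefore argue that although $M$ reads an unbounded stack to \emph{decide membership}, multiplication by a fixed generator perturbs a configuration only locally, so that the transformation of configurations is finite-state even though the membership predicate is not. Making this precise requires controlling both the nondeterminism of $M$ and the homomorphism $h$ in the factorisation, neither of which respects the group operation a priori; reconciling the membership-level identity for the co-word problem with an operation-level statement about how multiplication moves points of $\cc$ is the crux, and it is exactly here that a genuinely new idea is needed.

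A complementary line of attack, which I would pursue in parallel for intuition and partial results, is structural rather than language-theoretic. Since the set of finitely generated subgroups of $V$ enjoys all four known closure properties of the class $\cocf$ --- finitely generated subgroups, finite direct products, finite-index overgroups, and restricted wreath products $G \wr Q$ in which the top group $Q$ has context-free word problem (this is the content recorded for the virtually free case in the present paper, and one would extend the explicit embeddings to these operations on arbitrary finitely generated subgroups of $V$) --- the conjecture would follow at once from any theorem asserting that every $\cocf$ group is built from finite groups by finitely many applications of these four operations. No such generation theorem is presently known, and proving one, or else exhibiting a $\cocf$ group provably outside this closure and thereby refuting the conjecture, is itself the central open problem; I would not expect the structural route to close the gap without essentially the same analytic input as the automata-theoretic route above.
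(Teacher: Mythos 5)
You have been asked to prove a statement that the paper itself does not prove and cannot prove: it is stated as a \emph{conjecture} (Lehnert's conjecture), and it remains open. The paper's only contact with it is evidential --- the main theorem and Corollary \ref{cor:fourProps} show that the finitely generated subgroups of $V$ satisfy all four known closure properties of the class $\cocf$, which is consistency evidence, not a proof. Your proposal, to its credit, is honest about this: the only direction you actually establish is the easy one (every finitely generated subgroup of $V$ is $\cocf$, via Lehnert--Schweitzer \cite{lehnertschweitzer07} and closure of $\cocf$ under finitely generated subgroups \cite{HRRT}), and for the converse you explicitly flag that ``a genuinely new idea is needed'' and that ``no such generation theorem is presently known.'' So this is a research programme, not a proof, and it should not be presented as resolving the statement.

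The concrete gap is your second step, and it is worth naming precisely why it fails as stated. A nondeterministic PDA $M$ accepting $coW(G)$ is a \emph{language acceptor}: it assigns to each word $w$ over the generators a set of runs, with acceptance existentially quantified over runs. Nothing makes the stack configurations of $M$ into a set on which $G$ acts --- the runs of $M$ on $wg$ need bear no coherent relation to its runs on $w$, because nondeterminism, $\epsilon$-moves, and the Chomsky--Sch\"utzenberger homomorphism $h$ (which collapses the alphabet and is many-to-one on derivations) all destroy any hoped-for equivariance. Your ``bounded surgery near the top of the stack'' claim presupposes a well-defined, generator-wise map on configurations, which is exactly what does not exist a priori; indeed $coW(G)$ is invariant under conjugation and insertion of relators, so many words inducing the same group element have wildly different run trees. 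Note the contrast with the paper's Lemma \ref{CFdemonst}: there the implication runs in the tractable direction --- one \emph{starts} with an action (a demonstrative embedding, so the orbit of the node $n=0$ faithfully records group elements) and then reads off a PDA whose stack tracks that orbit. Reversing this, manufacturing a faithful Cantor action by prefix-replacement-type maps out of a mere acceptance condition, is precisely the open content of Lehnert's conjecture; your structural route likewise terminates in an admitted open problem (a generation theorem for $\cocf$ groups from the four closure operations). Neither route closes, and no argument currently known does.
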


The paper \cite{fryetal} investigates a class of groups which could provide possible counter-examples to Lehnert's conjecture.  We are not certain at this time whether or not those groups can embed in $V$, but part of the motivation for the work here was the request in \cite{fryetal} to embed the virtually free groups into $V$ as demonstrative subgroups of $V$.  Indeed, the authors of \cite{fryetal} ask in Question 1 whether non-abelian free groups can embed as demonstrative subgroups of $V$ (from which it would follow that all countable virtually free groups could embed as demonstrative subgroups).  Thus we answer that question here in the affirmative.

Finally, concerning the main result of this article, we note that in \cite{HRRT} the class of $\cocf$ groups is shown to be closed under four operations: passing to finitely generated subgroup, or to finite index over-group, or to a finite direct product of $\cocf$ groups, or finally to a restricted wreath product of a $\cocf$ group with a context free top group.  It was already known that the finitely generated subgroups of $V$ are closed under the first three operations (see \cite{roverthesis,collinolga}), but the last operation remained a mystery.   However, it is shown in \cite{collinolga} that the subgroups of $V$ are closed under passage to a restricted wreath product of a subgroup of $V$ with a demonstrable group for $V$ (for top group). Thus, a corollary of the main theorem of this paper is that the finitely generated subgroups of $V$ are closed under this operation as well.  

\begin{corollary}\label{cor:fourProps}
The finitely generated subgroups of $V$ are closed under the following four properties:
\begin{enumerate}
\item passing to finitely generated subgroups, 
\item passing to finite index over-groups,
\item taking finite direct products, and
\item taking wreath products with any $\mathcal{CF}$ top group.
\end{enumerate}
\end{corollary}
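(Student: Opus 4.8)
The plan is to observe that three of the four listed closure properties are already recorded in the literature, and that the fourth — the genuinely new one — now drops out by feeding Theorem~\ref{thm:mainTheorem} into the earlier closure result of \cite{collinolga}. For properties (1), (2), and (3), closure of the finitely generated subgroups of $V$ under passage to finitely generated subgroups, to finite index over-groups, and to finite direct products is established in \cite{roverthesis,collinolga}, so for these I would simply cite those references; property (1) is in any case immediate, since a finitely generated subgroup of a subgroup of $V$ is visibly a finitely generated subgroup of $V$.

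For property (4), I would first translate the formal-language hypothesis into group-theoretic terms. By the Muller--Schupp theorem \cite{mullerschupp83,mullerschupp85}, a $\mathcal{CF}$ group is precisely a finitely generated virtually free group; hence any $\mathcal{CF}$ top group $Q$ is finitely generated and virtually free, and Theorem~\ref{thm:mainTheorem} then guarantees that $Q$ is a demonstrable group for $V$. This single identification is the only place where the main theorem of the paper is used.

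Now let $H$ be an arbitrary finitely generated subgroup of $V$ and let $Q$ be a $\mathcal{CF}$ group. Since $Q$ is demonstrable for $V$ by the previous paragraph, the result of \cite{collinolga} — that the subgroups of $V$ are closed under passage to a restricted wreath product of a subgroup of $V$ (as base group) with a demonstrable group for $V$ (as top group) — applies directly and yields an embedding of the restricted wreath product $H \wr Q$ into $V$. It then remains only to check finite generation: a restricted wreath product is finitely generated exactly when both of its factors are, and here $H$ is finitely generated by hypothesis while $Q$ is finitely generated by virtue of being $\mathcal{CF}$. Thus $H \wr Q$ is a finitely generated subgroup of $V$, establishing (4).

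I do not expect any substantive obstacle at the level of the corollary itself, because all of the difficulty has already been absorbed into Theorem~\ref{thm:mainTheorem}. The only points requiring care are bookkeeping: correctly matching the $\mathcal{CF}$ top group to a demonstrable top group through the chain Muller--Schupp $\Rightarrow$ finitely generated virtually free $\Rightarrow$ demonstrable for $V$, and confirming that finite generation of $H \wr Q$ is inherited from its two finitely generated factors so that the output lands in the class of \emph{finitely generated} subgroups of $V$.
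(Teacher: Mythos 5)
Your proposal is correct and follows essentially the same route as the paper: properties (1)--(3) are cited to \cite{roverthesis,collinolga}, and property (4) is obtained by combining Muller--Schupp with Theorem~\ref{thm:mainTheorem} to see that a $\mathcal{CF}$ top group is demonstrable for $V$, then invoking the wreath-product embedding result (Theorem 1.2) of \cite{collinolga}. Your explicit check that $H \wr Q$ remains finitely generated is a small point the paper leaves implicit, but it is correct and does not change the argument.
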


In fact, we have a stronger result.

\begin{theorem}\label{wreath-embeddings}
Let $G$ be a subgroup of $V$, and let $T$ be any countable virtually free group.  Then, $G\wr T$ embeds as a subgroup of $V$.
\end{theorem}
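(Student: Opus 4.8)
The plan is to reduce the statement to the closure result of \cite{collinolga}, which asserts that if $H\le V$ and $D$ is a demonstrable group for $V$, then the (restricted) wreath product $H\wr D$ embeds in $V$. Since Theorem~\ref{thm:mainTheorem} only supplies demonstrability for \emph{finitely generated} virtually free groups, the core of the argument is to trade the countable top group $T$ for a finitely generated over-group. First I would record two elementary reductions. An inclusion $T\hookrightarrow\widehat T$ of top groups induces an embedding $G\wr T\hookrightarrow G\wr\widehat T$ of restricted wreath products: identify $\bigoplus_{t\in T}G$ with the finitely-supported functions in $\bigoplus_{\hat t\in\widehat T}G$ supported on the subset $T\subseteq\widehat T$, and observe this subgroup is preserved under conjugation by $T$ since $T\cdot T=T$, so the semidirect-product structures match. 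Second, if $\widehat T$ is finitely generated virtually free, then $\widehat T$ is demonstrable by Theorem~\ref{thm:mainTheorem}, so $G\wr\widehat T$ embeds in $V$ by \cite{collinolga}. Composing, $G\wr T\hookrightarrow G\wr\widehat T\hookrightarrow V$.

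Thus everything rests on the key lemma: every countable virtually free group $T$ embeds into a finitely generated virtually free group $\widehat T$. To prove this I would invoke the structure theory. By Bass–Serre theory (the generalisation of Karrass–Pietrowski–Solitar) a countable virtually free group is the fundamental group of a countable graph of finite groups; moreover, fixing a normal free subgroup $F\trianglelefteq T$ of finite index $m$, any finite subgroup $H\le T$ meets $F$ trivially and so injects into $T/F$, giving the uniform bound $|H|\le m$. Hence the vertex and edge groups occurring in the graph of groups fall into only finitely many isomorphism types. The idea is then to \emph{fold} the (possibly infinite) graph of groups onto a finite one by introducing finitely many stable letters, generating a free ``indexing'' group, that conjugate a single representative of each isomorphism type onto all of its copies. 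The model cases are $F_\infty=\langle x_1,x_2,\dots\rangle\hookrightarrow F_2=\langle a,b\rangle$ via $x_i\mapsto a^iba^{-i}$, and its torsion analogue $\ast_{i\ge1}C_2\hookrightarrow C_2\ast\mathbb Z$ via $t_i\mapsto a^isa^{-i}$; in each case the spread-out conjugates freely generate the intended product, and the over-group is a finite graph of finite groups, hence finitely generated virtually free.

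The main obstacle is precisely this folding lemma. The two model embeddings are verified by an easy normal-form computation, but the general case must simultaneously carry the nontrivial finite edge groups and the gluing data of the graph of groups, and one must check that the indexed copies assemble to a faithful copy of $T$ rather than collapsing. I would carry this out by passing to the Bass–Serre tree of the constructed finite graph of groups and applying the reduced-word (normal form) theorem for fundamental groups of graphs of groups to certify injectivity, using the torsion bound $m$ to keep the number of vertex-group representatives and stable letters finite. Once the lemma is established the remaining steps are routine, and the case of finite $T$ is immediate, since finite groups are finitely generated virtually free and \cite{collinolga} then applies to $G\wr T$ directly.
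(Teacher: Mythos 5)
Your first reduction is fine: $T\le\widehat T$ does give $G\wr T\le G\wr\widehat T$ for restricted wreath products, and once the top group is demonstrable the theorem of \cite{collinolga} applies. The problem is that everything then rests on your ``key lemma'' (every countable virtually free group embeds in a finitely generated virtually free group), and this is left essentially unproven; it is a genuinely hard statement, substantially harder than the theorem you are trying to prove. Two concrete difficulties. First, the structure result you invoke --- that a countable, non-finitely-generated virtually free group is the fundamental group of a graph of finite groups --- is not ``Bass--Serre theory''; Karrass--Pietrowski--Solitar covers the finitely generated case, and the countable case needs real machinery (e.g.\ Dunwoody's theorem on groups of cohomological dimension one over $\Q$). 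Second, and more seriously, your folding construction (finitely many stable letters generating a free indexing group that conjugates one representative of each isomorphism type onto its copies) produces targets of the shape $(H_1\ast\cdots\ast H_k)\ast F$, and such targets provably cannot work when edge groups are nontrivial. Take $T$ to be the amalgam of infinitely many copies of $C_4$ over a single common $C_2$ (which is central in $T$); this is countable virtually free, but in any free product of finite groups with a free group the centralizer of a nontrivial torsion element is finite, whereas the image of that $C_2$ would need its centralizer to contain all of $T$. So the receptacle must involve amalgamated or direct-product pieces such as $C_4\ast_{C_2}(C_2\times F_2)$, the stable letters must commute with the relevant edge groups, and one then still needs a delicate inductive independence (ping-pong on the Bass--Serre tree) argument to keep the infinitely many realized pieces from collapsing. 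Your proposal names these issues but does not resolve them, so the proof has a genuine gap at its central step.

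The detour is also unnecessary, and this is where your route diverges from the paper's. The restriction to finitely generated groups in Theorem \ref{thm:mainTheorem} is needed only for the converse direction (demonstrable $\Rightarrow$ virtually free); the positive direction is proved in the paper for \emph{all countable} virtually free groups. Indeed, every countable free group embeds in $F_2$, which is demonstrative by Corollary \ref{F2dem}; demonstrability passes to subgroups (Lemma \ref{demsub}, from \cite{collinolga}) and to finite-index overgroups with no finite-generation hypothesis (Theorem 3.3 of \cite{fryetal}). Hence a countable virtually free $T$ is itself demonstrable for $V$, and Theorem 1.2 of \cite{collinolga} immediately gives $G\wr T\le V$. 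In other words, the correct fix is not to trade $T$ for a finitely generated over-group, but to observe that the demonstrability machinery already has exactly the two closure properties (subgroups, finite-index overgroups) needed to reach all countable virtually free groups directly.
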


In particular, the finitely generated subgroups of $V$ are closed under the four known closure properties of the $\cocf$ groups.

\subsection{Languages and groups}

We now introduce classes of groups that are defined by properties found in Formal Language Theory. We begin by defining a formal language.

\begin{definition}
Let $\Sigma$ be a finite set of elements which we will call an \textit{alphabet}. The \textit{free monoid} of $\Sigma$, denoted $\Sigma^\ast$, is the set of all finite strings made from the elements of $\Sigma$, under the operation of concatenation. We call any subset of $\Sigma^\ast$ a \emph{language over the alphabet $\Sigma$}.
\end{definition}


Many formal languages can be defined by mathematical machines called \textit{automata}. An automaton is simple theoretical computation device that reads a string of symbols from a given alphabet and decides whether to accept that string or reject it. We will later define more formally what it means for automata to accept or reject a string. The set of all strings that are accepted by an automaton make a formal language.

\begin{definition}[Finite Automata]
A finite automaton, is a \textit{5-tuple}, $M=(\Sigma,Q,\delta,q_0,F)$, where $\Sigma$ is a finite set called the alphabet, $Q$ is a finite set of \textit{states}, $\delta$ is the transition function between the states, $q_0$ is a \emph{start state} in $Q$, and $F\subset Q$ is called the set of \textit{accept states}. The transition function is defined as $\delta:Q \times \Sigma \to Q$.
\end{definition}

Let $M$ be a finite automaton as above and let $w=w_1w_2\ldots w_n\in \Sigma^\ast$ be a word in the alphabet $\Sigma$.  We say \emph{$M$ runs on $w$ } (or \emph{computes} $w$) when we carry out the following process.  We assume we are ``in'' the initial state $q_0$.  Then we ``read'' the first letter $w_1$ of $w$, and transition to a new state $q=\delta(q_0,w_1)$.  We then read the next letter ($w_2$) and transition to the next state $\delta(q,w_2)$.  We continue this process until we have read the last letter $w_n$ and moved to a state $z$.  If $z\in F$ we say that \emph{$M$ accepts $w$}.  Otherwise we say that \emph{$M$ rejects $w$}.

Then, the language of words in $\Sigma^\ast$ which are accepted by $M$ is called the \emph{language accepted by the automaton $M$}.

Now, a language $\mathcal{L}\subset \Sigma^*$ is a \emph{regular language} if and only if there is a finite automaton $M$ so that $\mathcal{L}$ is precisely the language accepted by $M$.

\begin{definition}
Let $G$ be a group such that $G = \langle \Sigma | R \rangle$ is a finite presentation for $G$ and $\Sigma$ is closed under taking inverses. Then there exits a homomorphism $\theta: \Sigma^\ast \to G$ from the free monoid $\Sigma^\ast$ to the group $G$, defined by $\theta(w)=(w)_R$ for all $w\in \Sigma^\ast$ where $(w)_R$ is the element in $G$ that $w$ represents. We define the \textit{word problem} of $G$ to be $$W(G) = \{w\in \Sigma^\ast | \theta(w)=id \}$$ where $id$ is the identity element in $G$. We say that the \textit{co-word} problem of $G$, or $coW(G)$, is the complement of $W(G)$, i.e. the set of elements in $\Sigma^\ast$ that do not evaluate to the identity under the relations $R$.
\end{definition}

We are now ready to state our first historical motivating theorem, given in \cite{anisimov}, using slightly different language than in the original paper.
\begin{theorem}[Anisimov]
Let $G=\langle \Sigma\mid R\rangle$ be a finitely generated group.  $G$ is finite if and only if $W(G)$ is a regular language.
\end{theorem}

There are many different types of automata, each one producing a different sort of formal language. We will be mostly interested in \textit{push down automata} which give rise to \textit{context free languages}.



\begin{definition}[Push-down Automata]
\label{PDAdef}
A push-down automaton, or \textit{PDA}, is a \textit{6-tuple}, $M=(\Sigma,\Gamma,Q,\delta,q_0,F)$, where $\Sigma$, $\Gamma$ and $Q$ are finite sets (which we call the \textit{input} alphabet of $M$, the \textit{stack} alphabet of $M$, and the set of states of $M$, respectively), $\delta$ is a transition relation, $q_0\in Q$ is the start state, and $F\subset Q$ the set of accept states. The transition relation is defined as $\delta:Q \times \Sigma \times \Gamma^\ast \to Q \times \Gamma^\ast$ where $\Gamma^\ast$ is the set of all finite strings over the alphabet $\Gamma$.
\end{definition}

The \textit{stack} is an external memory device which uses it's own alphabet $\Gamma$. It is a finite ordered set of elements from $\Gamma$, $(\gamma_0,\gamma_1,\gamma_2,\ldots,\gamma_{n-1},\gamma_n)$, $n\ge 0$, that the $PDA$ has limited access to. In a $PDA$ the transition relation uses a string of elements from the top of the stack, $\gamma_k\ldots\gamma_n$ for some $k\ge 1$ as one of it's arguments.


A \textit{PDA} computes as follows. We begin with a string of symbols in $\Sigma$, called the input string, and at the state $q_0$ in the automaton. Initially the stack is empty. We read the first of the symbols in our input string. The transition function then uses the current state, the symbol that we read and the symbol on top of the stack (if any) to determine which state to move to next. After reading in the first input symbol we move to the state determined by $\delta$ and progess the reader to the second symbol in our input string. As $\delta$ is a relation, and not necessarily a function, It is possible that $\delta$ gives more than one option, in this case we have a choice as to which state we move to, this type of automaton is called \textit{non-determinitistic}.  Additionally the transition relation also deletes whatever it reads on the stack and replaces it with a new string of symbols from the stack alphabet. It is also possible for the automaton to change states even when no input symbol has been read, these are called $\epsilon-moves$ and are determined solely by the current state and the stack.

Given an input string $w \in \Sigma^\ast$ and a \textit{PDA} $M$, we denote by $\mathcal{P}(w)$ the set of all legal paths in $M$ that are determined by $w$. If our automaton is non-deterministic then the set $\mathcal{P}(w)$ could be larger than one. If there exists some path $p \in \mathcal{P}(w)$ such that $p$ ends at a state $q_f \in F$,  the set of final states, then we say the automaton \textit{accepts} the string $w$. If none of the paths in $\mathcal{P}(w)$ end at a state in $F$ then we say the automaton \textit{rejects} the string $w$. The set of all words in $\Sigma^\ast$ that are accepted by the automata $M$ is a context free language.

Note that we could equivalently have changed our acceptance criteria to only accept a string $w \in \Sigma^\ast$ if there exists a legal path such that the stack finishes empty. For every language that is created by an automaton that accepts via final state we can always find an automaton that accepts the same language by the empty stack criteria, and vice versa. The proof of this is given by Theorem 5.1 and Theorem 5.2 in \cite{hopcroftullman}.

\begin{definition}
Let $G$ be a finitely generated group such that $W(G)$ is a context free language. Then we call $G$ a context free group and say that $G$ is $\mathcal{CF}$. Equivalently, if $coW(G)$ is a context free language then we say $G$ is $\cocf$.
\end{definition}

It is shown in \cite{HRRT} that the property of being $\cocf$ is independent of the choice of finite generating set.

We note that every $\mathcal{CF}$ group is also a $\cocf$ group, but the converse is not true. The $\mathcal{CF}$ groups are completely classified by Muller and Schupp in \cite{mullerschupp83} and \cite{mullerschupp85} to be exactly the finitely generated virtually free groups.   It is not too hard to see that the word problem of a finitely generated virtually free group is actually a \emph{deterministic} context free language (see \cite{diekertweiss}).  As the complement of a deterministic context free language is also a deterministic context free language, we see that such a group has its set of co-words also forming a deterministic context free language.  However, the complement of a non-deterministic context free language need not be context free, hence the class of $\cocf$ groups is broader than the class of $\mathcal{CF}$ groups (as is witnessed, for instance, by R. Thompson's group $V$, which is not virtually free).

\subsection{Cantor space $\cc$}

As mentioned before R. Thomspon's Group $V$ is a group of automorphisms of the Cantor set, $\cc$. We therefore begin with a brief discussion to define the notation and language regarding the Cantor set that we will use throughout this note. We follow closely the description given in \cite{collinolga} but also make mention of other conventions as required.

Let $X=\{0,1\}$ which we will call our \textit{alphabet} and define $X^\ast$ to be the set of all finite strings over $X$. We introduce $\mathcal{T}_2$, the infinite rooted binary tree, as a non-directed graph that has vertex set $X^\ast$. We define edges in $\mathcal{T}_2$ as follows. If $u,v \in X^\ast$ then there exists an edge between $u$ and $v$ if and only if $u=vx$ or $v=uy$, where $x,y \in X$. We define the root of $\mathcal{T}_2$ to be empty word $\epsilon \in X^\ast$. A vertex $n$ in $\mathcal{T}_2$ will also be referred to as a \textit{node} of $\mathcal{T}_2$ and we define the \textit{address} of a node $n$ to be the path in $X^\ast$ from the root of $\mathcal{T}_2$ to $n$. An \textit{infinite descending path} in $\mathcal{T}_2$ is an infinite string of $0$'s and $1$'s. The boundary of $\mathcal{T}_2$ is the set of all such infinite descending paths which we denote by $\{0,1\}^\omega$. We say two paths are near to each other if they share a long common prefix. Consider the paths $p_{n_1}$ and $p_{n_2}$ in Figure \ref{treepaths} which start from the root $\epsilon$ and end at the vertices $n_1$ and $n_2$ respectively. Notice that from the root to the vertex $k$ the two paths share the same route through $\mathcal{T}_2$, we would therefore say that $p_{n_1}$ and $p_{n_2}$ share a prefix of length $k$. This induces a topology on the boundary of the tree that is  equivalent to the product topology for $\{0,1\}^\omega= \cc$.

\begin{figure}[h] 
\centering \def\svgwidth{180pt} 
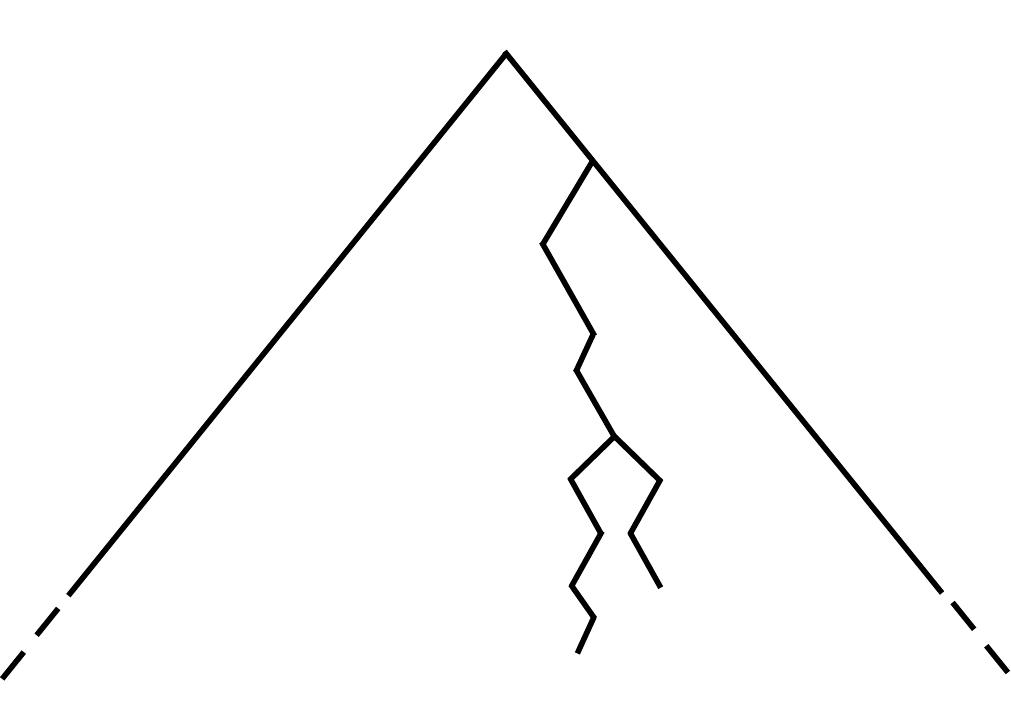 
\caption{Two paths in the infinite binary tree $\mathcal{T}_2$ with a common prefix of length $k$. Notice how we draw the tree with the root at the top.}
\label{treepaths} 
\end{figure}

Suppose $x \in\{0,1\}^\omega=\cc$, then we say that $x$ \textit{underlies} the vertex $w \in X^\ast$ of $\mathcal{T}_2$ if there exists $\hat{x} \in \{0,1\}^\omega$ such that $x=w\hat{x}$, or in other words $w$ is a prefix of the infinite string defining $x$. The set of all such strings in $\{0,1\}^\omega$ that underly the vertex $w$ in $\mathcal{T}_2$ is a clopen subset of $\{0,1\}^\omega$ which we will denote by $[w]$, and is in fact a basic open set in the product topology on $\{0,1\}^\omega$. Notice that $\cc \cong [w]$. In \cite{collinolga} the set $[w]$ was denoted by $\mathfrak{C}_w$.

\subsection {R. Thompson's group $V$, barriers, and prefix replacements} 

We now introduce some terminology used in \cite{coindexholtrover} that will help us to define Thompson's group $V$. We first define what Holt and R\"{o}ver call a \textit{barrier}. Let $\mathcal{B}$ be a finite set of finite strings in $X^\ast$ such that every element in $\{0,1\}^\omega \cong \mathfrak{C}$ has a unique string in $\mathcal{B}$ as a prefix. We call $\mathcal{B}$ a barrier. For example $\mathcal{B}$ could be the set $\{0,100,101,11\}$ but not the sets $\{0,100,11\}$ or $\{0,100,101,11,110\}$.

A barrier is equivalent to a complete finite antichain on the poset $X^\ast$ under the relation $x \le y \Leftrightarrow x$ is a prefix of $y$, as introducted by Birget \cite{birget2004}.

A \textit{prefix replacement} is a triple $g=(\mathcal{D},\mathcal{R},\sigma)$ where $\mathcal{D}$ and $\mathcal{R}$ are barriers and $\sigma$ is a bijection between them. This induces an action on $\mathfrak{C}_2$ which we call a \textit{prefix replacement map}, or \textbf{prm} for short (note that in \cite{coindexholtrover} this was called a prefix replacement \textit{permutation}). If $g$ is a prefix replacement then the \textbf{prm} induced from $g$ acts on $w \in \mathfrak{C}_2$ by replacing the prefix $d \in \mathcal{D}$ by the prefix $d^\sigma \in \mathcal{R}$. Note that there are many different prefix replacements which will give the same \textbf{prm}. We then define Thomspon's Group $V$ as the set of all \textbf{prm}'s on $\{0,1\}^\omega$, under composition.

We can draw barriers as finite, rooted subtrees of the infinite rooted binary tree $\mathcal{T}_2$. If $T$ is a subtree of $\mathcal{T}_2$ then we draw $T$ with the root at the top and the tree drawn ``downwards" away from the root. Given a vertex $u$ of $T$, we draw the child $u0$ of $u$ to the left of $u$ and the child $u1$ to the right. The vertices of $T$ that do not have any children we call \textit{leaves}. We can then describe unique paths through $T$ from the root to the leaves by elements of $X^\ast$, where a $0$ means travel down the next left hand edge, and $1$ means travel down the next right hand edge. A barrier will then define a finite binary tree where each element of the barrier will describe a path from the root to a unique leaf. We can now use this construction to represent elements of $V$ as pairs of binary trees. If the prefix replacement $g=(\mathcal{D},\mathcal{R},\sigma)$ induces an element $v \in V$, then the two barriers $\mathcal{D}$ and $\mathcal{R}$ define two binary trees and the bijection $\sigma$ is represented by a numerical labeling on the leaves of the trees. The tree pair representation of an example element $g \in V$, that will appear again later on the in paper, is given in Figure \ref{fig:treepair}. We typically call the tree on the left the \textit{domain} tree and the tree on the right the \textit{range} tree. In our example the barriers that define the domain and range trees are $\{0,100,101,11\}$ and $\{0,10,110,111\}$ respectively. The numbering of the leaves represents the bijection between the two barriers, where the leaf labelled $``1"$ in the domain tree is taken to the leaf labelled $``1"$ in the range tree and so on. 


\begin{figure}[H]
		\centering \def\svgwidth{250pt}
		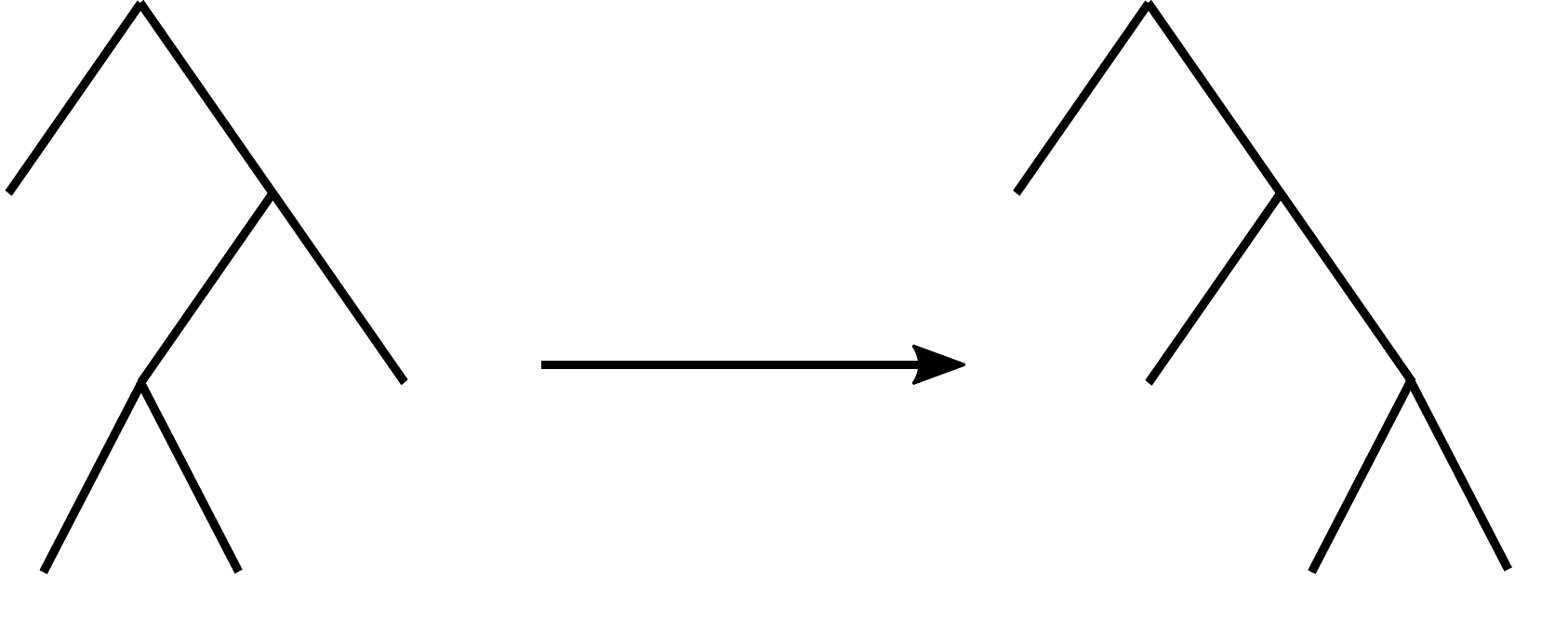
		\caption{The tree pair representation of an element $g$ of Thompson's group $V$}
		\label{fig:treepair}
\end{figure}

\subsection{On ``Geometrical'' embeddings}
With the growth of Geometric Group Theory as an independent field, there have been many types of group actions and embeddings defined which can be called ``geometrical.''  For example, one common definition is that a group acts faithfully and co-compactly on a metric space.   


There is a theorem of Rubin we wish to briefly mention in relation to this context (see \cite{rubin} for details).  One version of the theorem roughly states that when a group $G$ acts [effectively enough] on a [nice enough] space $X$ (for discussion, let us assume $G\leq Homeo(X)$ and the action and space have other qualities as well), then any isomorphism $\theta$ of $G$ induces a homeomorphism $\phi$ of $X$, and indeed, $\theta$ is realised as the topological conjugation $g~\mapsto~g^\phi$  in the larger group $Homeo(X)$.  We explore one way to interpret the ideas regarding ``geometrical actions'' specifically in the context of subgroups of a ``Rubin group'' acting on it's corresponding ``Rubin space."

  As is well known in the community of researchers with interests in the R. Thompson groups, $F<T<V$ are each ``Rubin groups'' with natural associated actions on the spaces $(0,1)$, $S^1$, and the Cantor space $\cc$, respectively.  For those familiar with the theory of $F$ and $T$, we observe the spaces $[0,1]$ and $S^1$  are the quotients of $\cc$ which transform the actions of the subgroups $F$ and $T$ on $\cc$ to Rubin type actions on the quotient spaces (we are playing a bit ``fast and loose'' with the definitions here by adding the points $0$ and $1$ to $(0,1)$; the induced action of $F$ on $[0,1]$ of course fixes these ``extra'' points). See \cite{cfp} for a general survey of $F$, $T$ and $V$.

Taking a cue from the ``co-compact'' aspects from the example of a ``geometrical action'' mentioned above, given a group-space pair $(H,X)$, the authors of \cite{collinolga} define the class $\dot{\mathfrak{D}}_{(H,X)}$ of \emph{demonstrative subgroups of $H$} (for $X$).

\begin{definition}
Suppose $H$ is a group that acts on a space $X$. We say that a subgroup $G \le H$ is a demonstrative subgroup of $H$ over $X$ if there exists a non-empty open subset $U \subset X$ such that for any two elements $g_1,g_2 \in G$ we have $Ug_1 \cap Ug_2 = \emptyset$ if and only if $g_1 \neq g_2$.
\end{definition}

If a subgroup $G$ has this property then we say $G$ is in the set $\dot{\mathfrak{D}}_{(H,X)}$ and all groups isomorphic to $G$ are in the class $\mathfrak{D}_{(H,X)}$ of demonstrable groups for the group-space pair $(H,X)$. We call the open set $U$ a \textit{demonstration} set.

Suppose that $G \le V$ is a demonstrative subgroup of $V$. By Lemma 3.1 of \cite{collinolga} there exists a vertex $n \in \{0,1\}^\ast \subset \mathcal{T}_2$ such that $[n]\cap [n]g = \emptyset$ for all $g \neq 1$. We call $n$ a \emph{demonstration node} for $G$. Note that $n$ will not be the unique demonstration node for $G$, any node that has an address with $n$ as a prefix will also suffice.

{\flushleft {\it Implications:}}\\
In \cite{collinolga} the authors show in Theorem 1.4 that if $D_1,$ $D_2$ are demonstrable groups for $V$, then one can find demonstrative embeddings $\hat{D}_1$, $\hat{D}_2$ of $D_1$ and $D_2$ so that $\langle \hat{D}_1,\hat{D}_2\rangle\cong D_1*D_2$ (while the free product so generated may not be demonstrative).  They also show that if $D$ is demonstrable for $V$, and $A$ is any subgroup of $V$, then the restricted wreath product $A\wr D$ embeds in $V$ (Theorem 1.2 of \cite{collinolga}).  It is this latter embedding result that combines with the main result of this paper to produce our Corollary \ref{cor:fourProps} and Theorem \ref{wreath-embeddings} from the introduction. As Theorem \ref{wreath-embeddings} states that if $G$ is a subgroup of $V$ and $T$ is any countable virtually free group then $G\wr T$ embeds as a subgroup of $V$, we then have that all four of the known closure properties of the $\cocf$ groups hold for the finitely generated subgroups of $V$ (see Corollary \ref{cor:fourProps}).

\section{The Main Results}

\subsection{The countable virtually free groups are in the class $\mathfrak{D}_{(V,\cc)}$}

The goal of this section is to prove that the countable virtually free groups are in the class $\mathfrak{D}_{(V,\cc)}$. We begin by introducing a new way of proving that a subgroup of $V$ is demonstrative which is based on the work done in \cite{collinolga}. The following lemma follows easily from the definitions.

\begin{lemma}
Let $G \le V$. If $[0] \cap [0]g = \emptyset$ for all non-trivial $g \in G$, then $G$ is a demonstrative subgroup of $V$.
\label{demcrit}
\end{lemma}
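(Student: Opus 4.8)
The plan is to use the natural candidate $U = [0]$ as the demonstration set. Since $[0]$ is a non-empty basic clopen (hence open) subset of $\cc$, it suffices to verify that it satisfies the defining property of a demonstration set for $G$: that for all $g_1, g_2 \in G$ one has $[0]g_1 \cap [0]g_2 = \emptyset$ exactly when $g_1 \neq g_2$. This is a biconditional, so I would check each implication in turn.

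The reverse implication is immediate from non-emptiness of $U$. If $g_1 = g_2$, then $[0]g_1 = [0]g_2$, and as $[0] \neq \emptyset$ this common translate is non-empty, so $[0]g_1 \cap [0]g_2 \neq \emptyset$. Contrapositively, $[0]g_1 \cap [0]g_2 = \emptyset$ forces $g_1 \neq g_2$, which is the implication we need.

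The substantive implication is that $g_1 \neq g_2$ implies $[0]g_1 \cap [0]g_2 = \emptyset$, and here I would exploit that every element of $V$ acts on $\cc$ as a homeomorphism, in particular as a bijection. First I would set $g \seteq g_2 g_1^{-1}$, which lies in $G$ and is non-trivial precisely because $G$ is a group and $g_1 \neq g_2$. Then I would apply the bijection induced by $g_1^{-1}$ to both translates. Because an injective map preserves and reflects disjointness of sets, and because $([0]g_1)g_1^{-1} = [0]$ while $([0]g_2)g_1^{-1} = [0](g_2 g_1^{-1}) = [0]g$, this yields the equivalence that $[0]g_1 \cap [0]g_2 = \emptyset$ if and only if $[0] \cap [0]g = \emptyset$. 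The latter holds by the hypothesis of the lemma applied to the non-trivial element $g$, which completes the argument and shows $U = [0]$ is a demonstration set, so $G$ is demonstrative.

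There is no real obstacle here, in keeping with the remark that the lemma follows easily from the definitions; the only step warranting a moment's care is the reduction, namely the observation that translating both sets by $g_1^{-1}$ converts the pairwise-disjointness condition on arbitrary $g_1, g_2$ into the single condition $[0] \cap [0]g = \emptyset$ that the hypothesis supplies. This is exactly the standard device by which a \emph{translates of $U$ are pairwise disjoint} statement can be tested against the identity alone, and it uses only the group structure of $G$ together with the fact that elements of $V$ act injectively on $\cc$.
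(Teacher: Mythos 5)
Your proof is correct, and it is exactly the argument the paper has in mind: the paper offers no written proof, remarking only that the lemma ``follows easily from the definitions,'' and your elaboration --- taking $U=[0]$ and reducing pairwise disjointness of translates to the single condition $[0]\cap[0]g=\emptyset$ via translation by $g_1^{-1}$ --- is the standard way to fill in that gap.
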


By Lemma 3.4 in \cite{collinolga} we actually have a stronger result.

\begin{cor}
\label{cor:demnode}
A group $G$ is demonstrable for $V$ if and only if there exists a subgroup $\widetilde{G} \le V$ such that $\widetilde{G} \cong G$ and $[0] \cap [0]g$ for all non-trivial $g \in \widetilde{G}$.
\end{cor}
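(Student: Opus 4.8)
The plan is to treat the two implications separately: the ($\Leftarrow$) direction is essentially immediate from Lemma \ref{demcrit}, while the ($\Rightarrow$) direction reduces to a single conjugation inside $V$ that relocates an arbitrary demonstration node to the fixed vertex $0$.

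For ($\Leftarrow$), suppose such a $\widetilde{G}\le V$ exists. Then Lemma \ref{demcrit} applies verbatim to $\widetilde{G}$, with the basic clopen set $[0]$ playing the role of the demonstration set $U$, so $\widetilde{G}$ is a demonstrative subgroup of $V$. Since $\widetilde{G}\cong G$, the group $G$ is by definition demonstrable for $V$, and nothing further is required.

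For ($\Rightarrow$), suppose $G$ is demonstrable for $V$, so there is a demonstrative subgroup $G'\le V$ with $G'\cong G$ (the case $G$ trivial being immediate, so assume $G$ nontrivial). By Lemma 3.1 of \cite{collinolga}, quoted in the preceding discussion, $G'$ admits a demonstration node: a vertex $n\in X^\ast$ with $[n]\cap[n]g=\emptyset$ for every non-trivial $g\in G'$. Note $n\neq\epsilon$, since $[\epsilon]=\cc$ would force $[n]\cap[n]g=\cc\neq\emptyset$; hence $[n]$ is a \emph{proper} basic clopen subset of $\cc$. The key step is to produce $h\in V$ with $[n]h=[0]$ and then set $\widetilde{G}\seteq h^{-1}G'h\cong G'\cong G$. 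Such an $h$ exists because $V$ acts ``transitively'' on proper basic clopen sets: both $[n]$ and $[0]$ are proper clopen subsets, so one can choose barriers $\mathcal{D}\ni n$ and $\mathcal{R}\ni 0$ with a bijection $\sigma$ sending $n$ to $0$ (refining one side so the remaining leaves match in number), and the induced \textbf{prm} is the desired $h$. With the right-action conventions of the paper, for non-trivial $a\in G'$ one computes
\[
[0]\cap[0](h^{-1}ah)=[n]h\cap([n]a)h=([n]\cap[n]a)h=\emptyset,
\]
using $[0]h^{-1}=[n]$, that $h$ is a bijection and so preserves intersections, and the demonstration-node property of $n$. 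Thus $0$ is a demonstration node for $\widetilde{G}$, completing the proof.

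I expect the only point requiring genuine care to be the bookkeeping of the right action, so that the conjugation identity $[0](h^{-1}ah)=([n]a)h$ comes out correctly, together with the explicit realisation of the conjugator $h$ as a prefix-replacement map. Both are routine given the self-similarity of $\cc$, and together they constitute the content of Lemma 3.4 of \cite{collinolga} on which the corollary is based.
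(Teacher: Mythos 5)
Your proposal is correct and takes essentially the same route as the paper: the paper disposes of the corollary by citing Lemma 3.4 of \cite{collinolga} (together with Lemma \ref{demcrit} for the easy direction), and your conjugation argument --- producing $h\in V$ with $[n]h=[0]$ via matched barriers and checking $[0]\cap[0](h^{-1}ah)=([n]\cap[n]a)h=\emptyset$ --- is precisely the content of that cited lemma, with the details written out.
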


The rest of the section will be used to show that countable virtually free groups are demonstrable for $V$. This will be done by constructing a demonstrative embedding of the modular group $\Gamma = C_2 \ast C_3 \cong \langle x,y | x^2=y^3=1\rangle$ into $V$. As the class of demonstrative groups is closed under passing to subgroups we will have then proven that any countable free group admits a demonstrative embedding into $V$. In \cite{fryetal} it is shown that the class of demonstrable groups for $V$ is closed under passage to finite index overgroups, which will give us our final result.

We begin by proposing an embedding $\psi:G \rightarrowtail V$, where $G = \langle \alpha, \beta | \alpha^2,\beta^3\rangle \cong \Gamma$ factors as the free product of its subgroups $C_2 = \langle\alpha\rangle$ and $C_3=\langle \beta\rangle$.

We will define the mapping $\psi$ from $G$ to $V$ by determining where the generators $\alpha$ and $\beta$ map to, and quoting von Dyck's theorem.

We now define $\psi(\alpha) \mapsto a$ and $\psi(\beta) \mapsto b$, where $a$ and $b$ are given by the tree-pairs below.

	\begin{figure}[h]
		\centering \def\svgwidth{300pt}
		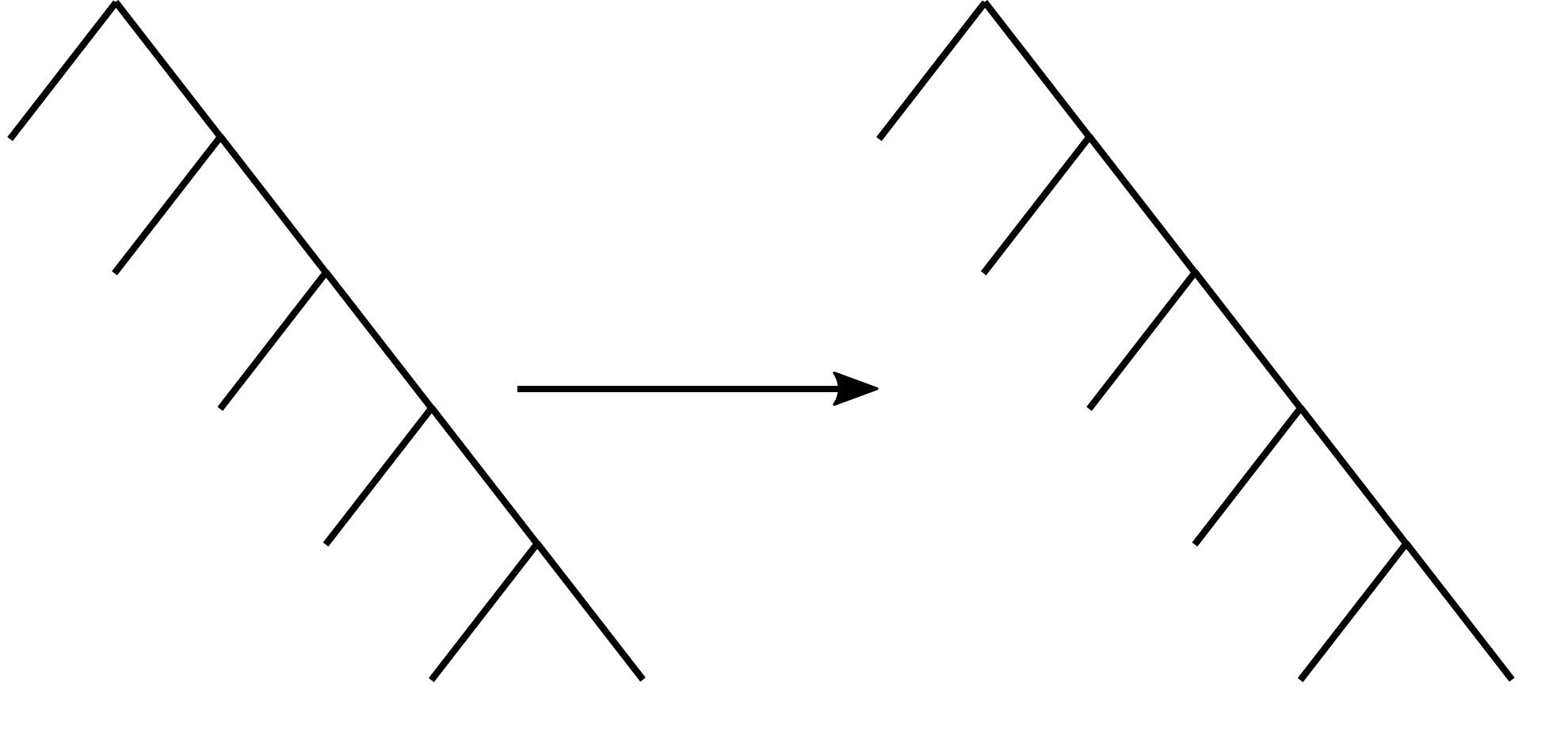
		\caption{The image of $\alpha$ under $\psi$ in Thompson's group $V$}
		\label{defa}
	\end{figure}

	\begin{figure}[h]
		\centering \def\svgwidth{250pt}
		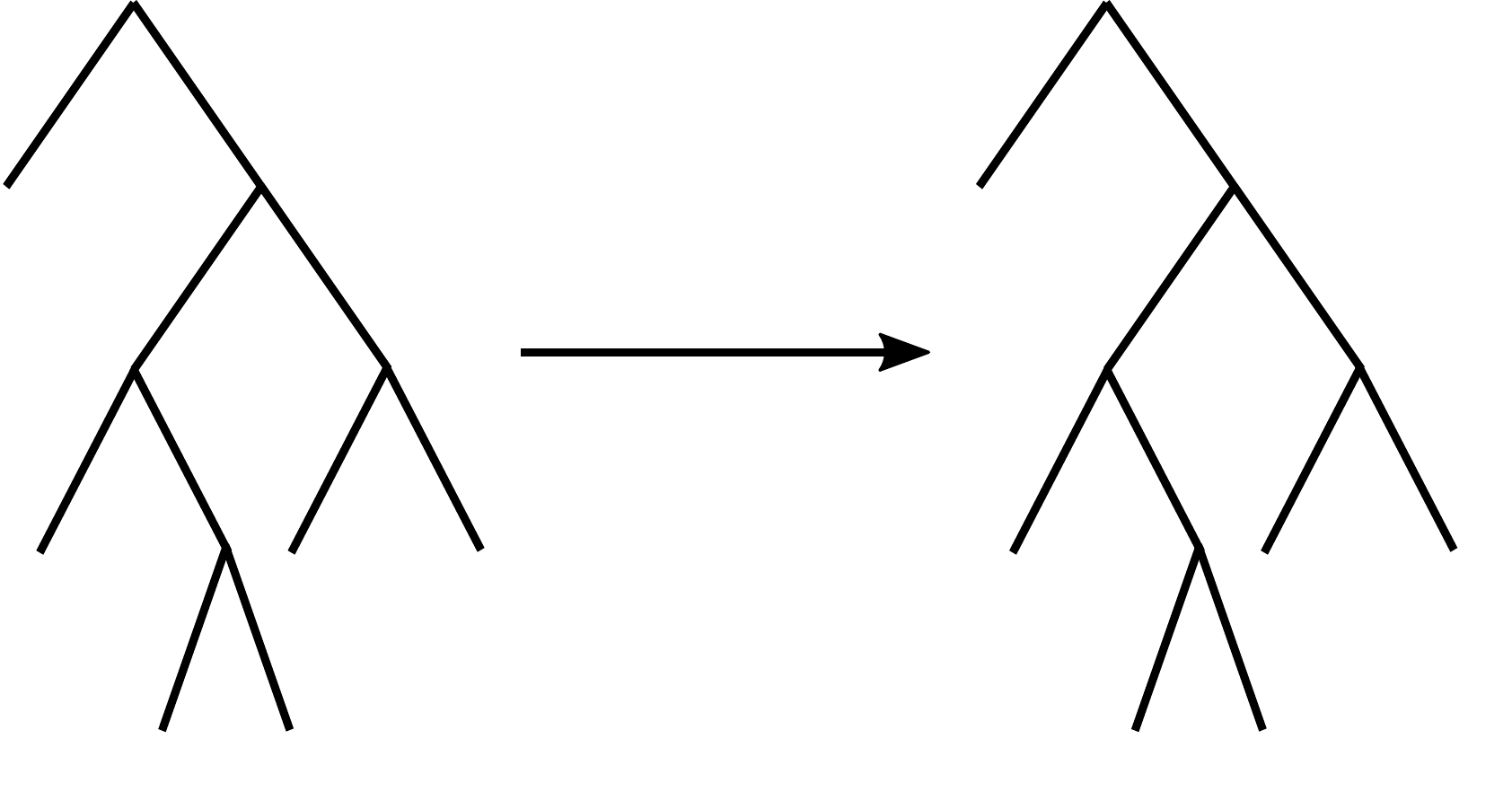
		\caption{The image of $\beta$ under $\psi$ in Thompson's group $V$}
		\label{defb}
	\end{figure}
	
	 An important property to notice is that $\psi(\alpha)=a$ maps the open set $[10]$ to the open set $[11110]$. This dynamical property will be important later when we prove that the homomorphism $\psi:G \to V$ is an embedding.
	
	Similarly, we observe that both $\psi(\beta)=b$ and $(\psi(\beta))^{-1}=b^{-1}$ take the open set $[111]$ and map it into the open set $[10]$. Remembering how the action of $a$ maps $[10]$ into $[11110]$, we begin to see how a ``back and forth" action could be produced by alternating non-trivial elements of $\psi(C_2)$ and $\psi(C_3)$.

  It is immediate that $a$ has order two and $b$ has order three, so by von Dyck's theorem \cite{vonDyck}, the map $\psi$ extends uniquely to a well defined group homomorphism from $G$ to $V$, which we will still call $\psi$, below.

\begin{prop}
\label{C2C3embed}
The group homomorphism $\psi : C_2 \ast C_3 \to V$ induced by $\alpha\mapsto a$ and $\beta\mapsto b$ 
is an embedding.
\end{prop}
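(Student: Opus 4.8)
The plan is to prove injectivity of $\psi$ by a ping-pong (Table-Tennis) argument, using the two disjoint cones $[10]$ and $[111]$ as the table-tennis sets and the dynamical facts already recorded above. Recall that every non-trivial element of $G = C_2 \ast C_3$ has a unique reduced form as an alternating product of the non-trivial element $\alpha$ of $C_2$ and the non-trivial elements $\beta,\beta^2$ of $C_3$; since $\psi$ is a homomorphism with $\psi(\alpha)=a$ and $\psi(\beta)=b$, showing $\psi$ is injective amounts to showing that no such non-trivial alternating word maps to the identity of $V$.

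First I would record the two inclusions that drive the argument, both of which follow from reading the tree pairs in Figures \ref{defa} and \ref{defb}. From the action of $a$ we have $[10]\,a = [11110] \subseteq [111]$, since $111$ is a prefix of $11110$. From the action of $b$ we have $[111]\,b \subseteq [10]$ and $[111]\,b^{2} = [111]\,b^{-1}\subseteq [10]$; that is, every non-trivial element of $\langle b\rangle$ carries the cone $[111]$ into the cone $[10]$. The two cones $[10]$ and $[111]$ are non-empty, clopen, and disjoint, as neither of the words $10$, $111$ is a prefix of the other.

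With these inclusions in hand I would apply the ping-pong lemma for free products, taking $\langle b\rangle \cong C_3$ as the distinguished factor of order at least three and $\langle a\rangle \cong C_2$ as the remaining factor, with table sets $X_1 = [10]$ and $X_2 = [111]$. The hypotheses are exactly the inclusions above: every non-trivial element of $\langle b \rangle$ sends $X_2$ into $X_1$, and the (unique) non-trivial element $a$ of $\langle a\rangle$ sends $X_1$ into $X_2$ (as $[10]\,a=[11110]\subseteq[111]$). The lemma then yields $\langle a,b\rangle = \langle b\rangle \ast \langle a\rangle \cong C_3 \ast C_2 \cong C_2\ast C_3$, so the surjection $\psi\colon C_2\ast C_3\to\langle a,b\rangle$ guaranteed by von Dyck's theorem is in fact an isomorphism, i.e.\ $\psi$ is an embedding.

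The one point that requires care is the asymmetry coming from the order-two factor: a naive symmetric ping-pong fails because reduced words may begin or end with $\alpha$. This is precisely why the distinguished factor must be the order-three group $\langle b\rangle$, and why we only need to control the image of $X_1=[10]$ (and not of $X_2$) under $a$. If one prefers a self-contained argument in place of quoting the lemma, the same two inclusions let one prove by induction on syllable length that any non-trivial reduced word moves a suitable test cone off itself --- tracking the nested images of $[111]$ through the alternating syllables, which stay confined to $[10]$ and $[111]$ --- and hence acts non-trivially on $\cc$, with the bookkeeping splitting into the four cases according to the first and last syllables of the word. I expect verifying the two cone inclusions directly from the tree pairs to be the only genuinely computational step, the remainder being a formal application of ping-pong.
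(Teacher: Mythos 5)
Your proposal is correct and follows essentially the same route as the paper: the paper also proves the proposition by applying the Ping-Pong Lemma (Lemma \ref{pingpong}) with $H_1=\langle b\rangle$, $H_2=\langle a\rangle$, and the same table sets $X_1=[10]$, $X_2=[111]$, using exactly the inclusions $([10])a=[11110]\subset[111]$, $([111])b=[100]\subset[10]$, and $([111])b^{-1}=[1011]\subset[10]$, then invoking von Dyck's theorem to conclude that $\psi$ is an embedding. Your additional remark about why the order-three factor must play the distinguished role is a sound observation consistent with the hypotheses $|H_1|\ge 3$, $|H_2|\ge 2$ of the lemma as stated in the paper.
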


To prove Proposition \ref{C2C3embed} we use Fricke and Klein's well known criterion, the Ping-Pong Lemma. The version we give here is based on the one found in \cite{delaharpe}.

\begin{lemma}[Ping-Pong Lemma]
Let $G$ be a group acting on a set $X$ and let $H_1$ and $H_2$ be two subgroups of $G$ such that $|H_1|\ge 3$ and $|H_2| \ge 2$. Suppose there exists two non-empty subsets $X_1$ and $X_2$ such that the following three conditions hold
\begin{enumerate}
	\item $X_2 \not \subset X_1$
	\item for all non-trivial $h_1 \in H_1$, $h_1(X_2) \subset X_1$ 
	\item for all non-trivial $h_2 \in H_2$, $h_2(X_1) \subset X_2$

\end{enumerate}
Then $\langle H_1,H_2 \rangle \cong H_1 \ast H_2$.
\label{pingpong}
\end{lemma}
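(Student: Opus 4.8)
The plan is to prove that the canonical homomorphism $\phi \colon H_1 \ast H_2 \to \langle H_1, H_2\rangle$ is an isomorphism. Since $H_1$ and $H_2$ are subgroups of $G$ generating $\langle H_1, H_2\rangle$, the universal property of the free product supplies such a $\phi$ and makes it automatically surjective, so the entire content is injectivity. By the normal form theorem for free products, every nontrivial element of $H_1 \ast H_2$ is represented by a \emph{reduced word}, i.e.\ a nonempty alternating product of nontrivial letters drawn from $H_1$ and $H_2$. It therefore suffices to show that each reduced word $w$, regarded as an element of $G$ via $\phi$, acts nontrivially on $X$; this forces $w \neq 1$ in $G$, so $\ker\phi$ is trivial.

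First I would treat the base case of words $w = a_1 b_1 a_2 \cdots b_{n-1} a_n$ that both begin and end with a nontrivial letter of $H_1$ (so $a_i \in H_1 \setminus\{1\}$ and $b_j \in H_2\setminus\{1\}$). By hypothesis (1) I may fix a point $x \in X_2 \setminus X_1$. Applying the letters of $w$ to $x$ from right to left and alternately invoking hypotheses (2) and (3), the image bounces between the two sets: $a_n(x) \in X_1$, then $b_{n-1}(a_n(x)) \in X_2$, and so on, so that after applying the leftmost letter $a_1$ the point lands in $X_1$. Hence $w(x) \in X_1$ while $x \notin X_1$, so $w(x) \neq x$ and $w$ acts nontrivially. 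The degenerate single-letter case $w = a_1$ is identical, giving $w(x) \in X_1 \not\ni x$.

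Next I would reduce an arbitrary reduced word to this base case by conjugation, using that $w$ is trivial in $G$ if and only if any conjugate $a w a^{-1}$ is. Depending on the first and last letters of $w$, one selects a suitable $a \in H_1\setminus\{1\}$: if $w$ begins with an $H_1$-letter $a_1$ one needs $a a_1 \neq 1$, and if $w$ ends with an $H_1$-letter $a_n$ one needs $a_n a^{-1} \neq 1$, so that $a w a^{-1}$ is again reduced and now begins and ends with $H_1$-letters (when an endpoint of $w$ is an $H_2$-letter, its junction with $a^{\pm1}$ is automatically reduced, contributing no constraint). This is precisely where the hypothesis $|H_1| \geq 3$ is essential, and I expect it to be the only genuine subtlety: the forbidden values for $a$ are at most the two elements $1$ and $a_1^{-1}$ (or $1$ and $a_n$), so having at least three elements in $H_1$ guarantees an admissible choice survives. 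The hypothesis $|H_2|\geq 2$ enters only to ensure $H_2$ is nontrivial, so that genuine alternating words exist.

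Finally, combining the two steps, every nontrivial reduced word of $H_1 \ast H_2$ is carried by $\phi$ to an element acting nontrivially on $X$, hence is nontrivial in $G$. Therefore $\phi$ is injective, and being surjective it is an isomorphism $\langle H_1, H_2\rangle \cong H_1 \ast H_2$, as claimed.
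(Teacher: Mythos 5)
The paper does not prove this lemma at all: it is quoted as a classical result, with the proof deferred to the cited source (de la Harpe). Your argument is correct and is essentially the standard proof found there --- the base case of words beginning and ending in $H_1$-letters via a point of $X_2 \setminus X_1$, then conjugation by a suitable $a \in H_1$, with the count of at most two forbidden values ($1$ together with $a_1^{-1}$ or $a_n$, never both, since the doubly-$H_1$-ended case needs no conjugation) correctly isolating why $|H_1| \ge 3$ suffices.
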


\begin{lemma}
	The group $G_\psi = \langle a,b \rangle \le V$ factors as $\langle a\rangle \ast \langle b\rangle \cong \Gamma$.
\label{Gembed}
\end{lemma}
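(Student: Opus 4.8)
The plan is to invoke the Ping-Pong Lemma (Lemma \ref{pingpong}) directly, with the two cyclic factors playing the roles of $H_1$ and $H_2$. Since the hypothesis demands $|H_1| \ge 3$, I assign the order-three subgroup to $H_1$: set $H_1 = \langle b \rangle$ and $H_2 = \langle a \rangle$. Because $a$ has order two and $b$ has order three, these satisfy $|H_1| = 3 \ge 3$ and $|H_2| = 2 \ge 2$, and moreover $\langle a \rangle \cong C_2$ while $\langle b \rangle \cong C_3$, so a successful application of the lemma yields $G_\psi = \langle a, b\rangle \cong \langle b \rangle \ast \langle a \rangle \cong C_3 \ast C_2 \cong \Gamma$, which is exactly the claim.

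For the ping-pong sets I would take $X_1 = [10]$ and $X_2 = [111]$, precisely the clopen sets featuring in the dynamical observations recorded just before Proposition \ref{C2C3embed}. The three hypotheses are then checked as follows. Condition (1), $X_2 \not\subset X_1$, is immediate since $[111]$ and $[10]$ are disjoint basic clopen sets, their defining prefixes being incompatible. Condition (3) uses the fact that $a$ sends $[10]$ onto $[11110]$; since every string with prefix $11110$ has prefix $111$, we get $a([10]) = [11110] \subset [111] = X_2$, and as $a$ is the only non-trivial element of $H_2$ this handles all of $H_2$. Condition (2) uses the fact that both $b$ and $b^{-1}$ carry $[111]$ into $[10]$; since $b$ has order three we have $b^{-1} = b^2$, so the two non-trivial elements $b, b^2$ of $H_1$ both satisfy $h_1([111]) \subset [10] = X_1$, covering all of $H_1$.

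With all three hypotheses verified, the Ping-Pong Lemma gives $\langle H_1, H_2\rangle \cong H_1 \ast H_2$, which is the desired factorisation. The only genuine content is establishing the two dynamical facts $a([10]) = [11110]$ and $b^{\pm 1}([111]) \subset [10]$; I expect this bookkeeping --- reading the prefix-replacement behaviour off the tree pairs in Figures \ref{defa} and \ref{defb} --- to be the main, if routine, obstacle. A small related point is that the order-three factor forces condition (2) to be checked for $b^2$ as well as $b$, which is exactly why recording the behaviour of $b^{-1}$ on $[111]$ in advance is convenient.
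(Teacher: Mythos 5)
Your proposal is correct and follows essentially the same route as the paper: the same assignment $H_1 = \langle b\rangle$, $H_2 = \langle a\rangle$, the same ping-pong sets $X_1 = [10]$, $X_2 = [111]$, and the same dynamical facts $([10])a = [11110] \subset [111]$ and $([111])b^{\pm 1} \subset [10]$ read off the tree pairs. The paper likewise verifies these inclusions by direct prefix-replacement computation (obtaining $[100]$ and $[1011]$ as the images under $b$ and $b^{-1}$), so there is no substantive difference between the two arguments.
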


\begin{proof}
To be able to use the Ping-Pong Lemma we have to find two sets $X_1$, $X_2$ in $\cc$ that satisfy the three properties given above. Let $X_1 = [10]$ and $X_2 = [111]$ be two open sets in $\cc$. Immediately we see that $X_2 \not \subset X_1$ and so the first condition is met.

We now identify $\langle b \rangle$ and $\langle a \rangle$ with the subgroups $H_1$ and $H_2$ given in lemma \ref{pingpong}. Let $a$ be the non-trivial element in $\langle a \rangle$ as given in Figure \ref{defa}. Observe $(X_1)a=([10])a=[11110] \subset [111] = X_2$. This confirms the third requirement.

For $\langle b \rangle$ we have two non-trivial elements, $b$ and $b^{-1}$. Observe $(X_2)b=([111])b=[100] \subset [10] = X_1$, and similarly for $b^{-1}$, $(X_2)b^{-1}=([111])b^{-1}=[1011] \subset [10] = X_1$. Thus we have shown that all three of the requirements in lemma \ref{pingpong} are met and thus by the Ping-Pong Lemma $G_\psi =\langle a, b \rangle$ factors as $\langle a \rangle\ast \langle b \rangle \cong \Gamma$.
\end{proof}

Before we proceed we define notation that will be used in the future. Suppose $g \in G_\psi$ where $a$ and $b$ are as before. Then $g$ can be written in a unique \textit{normal form} as $g=g_1g_2g_3\ldots g_n$ where $g_i \in \{a,b,b^{-1}\}$ and if $g_j \in \{a\}$ then $g_{j+1} \in \{b,b^{-1}\}$ and vice versa. This normal form is in fact \textit{geodesic}, the shortest path from the identity to the element $g$ in the cayley graph of $G_\psi$. If we have two elements $g,h \in G_\psi$ written in normal form such that $g=g_1g_2g_3 \ldots g_n$ and $h=h_1h_2\ldots h_m$ then $gh = g_1g_2g_3 \ldots g_nh_1h_2\ldots h_m \in G_\psi$ is the concatenation of $g$ and $h$. The element $gh$ is also in normal form if and only if $g_n$ and $h_1$ are not contained within the same subgroup, $\langle a \rangle$ or $\langle b \rangle$, of $G_\psi$. For the rest of the paper, unless stated otherwise, we will assume that all our group elements are written in normal form. We define a function $len:\langle a,b \rangle \to \mathbb{N}_0$ by $len(g)=n$ for any $g=g_1g_2g_3\ldots g_n$ written in the normal form. This is the well defined geodesic length for $g$. We define the length of the identity to be zero. 

\begin{lemma}
\label{backandforth}
Suppose $g \in G_\psi$ and $len(g) \ge 2$. Then
\begin{equation}
[0]g \subseteq
\begin{cases}
    [111],& \text{if } g \text{ ends with generator } a\\
    [10], & \text{if } g \text{ ends with either of the generators } b \text{ or } b^{-1}
\end{cases}
\end{equation}
\end{lemma}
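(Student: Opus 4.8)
The plan is to induct on $len(g)$, following the ``back and forth'' dynamics between the region $[111]$ and its complement that was flagged in the discussion preceding Lemma~\ref{Gembed}. All of the single-step facts I need can be read off the tree pairs in Figures~\ref{defa} and~\ref{defb}, together with the ping-pong computations already recorded.

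First I would collect the single-step facts. From Figure~\ref{defa} one reads that $a$ is the involution interchanging $[0]\leftrightarrow[11111]$, $[10]\leftrightarrow[11110]$, and $[110]\leftrightarrow[1110]$; in particular $a$ carries $\cc\setminus[111]=[0]\cup[10]\cup[110]$ onto $[111]$, so $(\cc\setminus[111])a=[111]$. The ping-pong computations already in hand give the complementary facts $[111]b\subseteq[10]$ and $[111]b^{-1}\subseteq[10]$. Finally, from the figures I record the images of the demonstration node under a single generator: $[0]a=[11111]\subseteq[111]$, $[0]b=[1010]\subseteq[10]$, and $[0]b^{-1}=[110]$.

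The crucial design choice is the inductive invariant. Since $[0]b^{-1}=[110]$ does not lie in $[10]$, the naive invariant ``$[0]g$ lies in $[10]$ or $[111]$'' already fails after one letter, so I would instead maintain the coarser statement $P(g)$: if $g$ ends in $a$ then $[0]g\subseteq[111]$, and if $g$ ends in $b^{\pm1}$ then $[0]g\subseteq\cc\setminus[111]$. The base case $len(g)=1$ is exactly the three single-generator computations above. For the inductive step write $g=g'g_n$ with $g'=g_1\cdots g_{n-1}$ in normal form; by the alternation in the normal form $g'$ ends in the opposite type of generator to $g_n$, so $P(g')$ applies. If $g_n=a$ then $g'$ ends in $b^{\pm1}$, whence $[0]g'\subseteq\cc\setminus[111]$ and therefore $[0]g=([0]g')a\subseteq(\cc\setminus[111])a=[111]$. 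If $g_n\in\{b,b^{-1}\}$ then $g'$ ends in $a$, whence $[0]g'\subseteq[111]$ and therefore $[0]g=([0]g')g_n\subseteq[111]g_n\subseteq[10]$, using monotonicity of the action on subsets.

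Note that this last line already yields the sharp conclusion: for a word ending in $b^{\pm1}$ (which forces $len(g)\ge2$) we in fact obtain $[0]g\subseteq[10]$, not merely $[0]g\subseteq\cc\setminus[111]$, which is precisely the second case of the lemma, while the first case is the $a$-clause of $P$. So the statement follows by restricting $P$ (with this strengthening) to $len(g)\ge2$. I expect the main obstacle to be exactly this mismatch between the coarse invariant that survives the induction and the sharp $[10]$-bound demanded in the conclusion, and it is also what explains the hypothesis $len(g)\ge2$: one can only guarantee landing \emph{inside} $[10]$ (rather than merely outside $[111]$) after a final $a$-then-$b^{\pm1}$ exchange, so length-one words ending in $b^{-1}$, where $[0]b^{-1}=[110]\not\subseteq[10]$, must be excluded. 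A secondary point needing care is justifying the global behaviour $(\cc\setminus[111])a=[111]$ from Figure~\ref{defa}, since only the action of $a$ on $[10]$ was recorded earlier; this is what lets the $a$-step of the induction absorb the stray image $[110]$ arising from a leading $b^{-1}$.
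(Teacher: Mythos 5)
Your proof is correct, and it shares the paper's basic skeleton --- induction along the normal form, exploiting the alternation of generators and the ping-pong dynamics --- but the bookkeeping is genuinely different. The paper takes the statement of the lemma itself as the inductive invariant and therefore starts the induction at $len(g)=2$, explicitly checking the four length-two words $ab$, $ab^{-1}$, $ba$, $b^{-1}a$; this sidesteps exactly the anomaly you identified, namely that $[0]b^{-1}=[110]\not\subseteq[10]$. Its inductive step then needs only the three inclusions already recorded in the ping-pong argument: $[10]a=[11110]\subseteq[111]$, $[111]b=[100]\subseteq[10]$, and $[111]b^{-1}=[1011]\subseteq[10]$. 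You instead weaken the invariant for words ending in $b^{\pm1}$ to $[0]g\subseteq\cc\setminus[111]$, which lets you start at $len(g)=1$ with the three single-generator computations, at the price of the extra global fact $(\cc\setminus[111])a=[111]$; this fact is true (from the tree pair, $a$ is the involution swapping the prefixes $0\leftrightarrow 11111$, $10\leftrightarrow 11110$, $110\leftrightarrow 1110$, and the three image cones partition $[111]$), but it is not among the computations the paper records, so it genuinely requires the verification you flag at the end. Your closing observation --- that the $b^{\pm1}$ step of the induction already yields the sharp bound $[10]$ once the preceding letter is forced to be $a$ --- is what recovers the full strength of the lemma from the coarser invariant, and you state and use it correctly. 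The net trade: the paper needs four base cases but only previously established single-cone facts; yours needs three base cases plus one new fact about the global action of $a$.
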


\begin{proof}
Let $\mathcal{P}(g)$ be the statement

\begin{equation*}
[0]g \subseteq
\begin{cases}
    [111],& \text{if } g \text{ ends with generator } a\\
    [10], & \text{if } g \text{ ends with either of the generators } b \text{ or } b^{-1}
\end{cases}
\end{equation*}

for some $g \in G_\psi$ such that $len(g) \ge 2$.

We will proceed by induction on the length of $g$.
Suppose $g \in G_\psi$ such that $len(g) = 2$. There are four options, namely, $ab$, $ab^{-1}$, $ba$ and $b^{-1}a$. Suppose $g=ab$, then $[0]g=[10011]$ and $\mathcal{P}(g)$ holds. Suppose $g=ab^{-1}$, then $[0]g=[101111]$ and $\mathcal{P}(g)$ holds. Suppose $g=ba$, then $[0]g=[1111010]$ and $\mathcal{P}(g)$ holds. Finally suppose $g=b^{-1}a$, then $[0]g=[1110]$ and $\mathcal{P}(g)$ holds.

Suppose $\mathcal{P}(g)$ is true for all $g \in G_\psi$ such that $2 \le len(g) \le n$, $n \in \mathbb{N}$. Now suppose $h\in G_\psi$ such that $len(h)=n+1$. Let $h'$ be the prefix of length $n$ of $h$ when $h$ is written in normal form. 

Suppose $h'$ ends with $a$, there are two options for $h$, either $h=h'b$ or $h=h'b^{-1}$. As $len(h')=n$, by our inductive assumption $[0]h' \subseteq [111]$. Thus if $h=h'b$, then $[0]h = [0]h'b \subseteq [111]b = [100]$ and $\mathcal{P}(h)$ is true. Suppose $h=h'b^{-1}$, then $[0]h = [0]h'b^{-1} \subseteq [111]b^{-1} = [1011]$ and again $\mathcal{P}(h)$ is true.

Suppose instead that $h'$ ends with either $b$ or $b^{-1}$. Then there is only one option for $h$, namely $h=h'a$. As $len(h')=n$, by our inductive assumption $[0]h' \subseteq [10]$. Thus $[0]h = [0]h'a \subseteq [10]a = [11110]$ and $\mathcal{P}(h)$ is true.

Therefore for all $h \in G_\psi$ such that $len(h) = n+1$, the statement $\mathcal{P}(h)$ is true, and therefore by induction $\mathcal{P}(g)$ must be true for all $g \in G_\psi$ such that $len(g) \ge 2$.

\end{proof}

\begin{lemma}
For every non-trivial $g\in G_\psi$,
\begin{center}
	$[0] \cap [0]g = \emptyset$
\end{center}
\end{lemma}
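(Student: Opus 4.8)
The plan is to reduce everything to the single observation that $[0]$ and $[1]$ are disjoint clopen sets (no infinite string can begin with both $0$ and $1$), and then to show that every non-trivial $g \in G_\psi$ pushes $[0]$ entirely into $[1]$. I would split on the geodesic length $len(g)$, which is at least $1$ since $g$ is non-trivial.

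For the case $len(g) \ge 2$, the work is already done by Lemma \ref{backandforth}: that lemma gives $[0]g \subseteq [111]$ or $[0]g \subseteq [10]$ according to the last generator of $g$. Since both $[111]$ and $[10]$ are contained in $[1]$, in either case $[0]g \subseteq [1]$, and hence $[0] \cap [0]g \subseteq [0]\cap[1] = \emptyset$.

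It then remains only to treat the three length-one elements $g \in \{a,b,b^{-1}\}$, which I would handle by direct computation from the tree pairs of Figures \ref{defa} and \ref{defb}. Reading off the images gives $[0]a = [11111]$, $[0]b = [1010]$, and $[0]b^{-1} = [110]$; each of these is again a subset of $[1]$, so $[0] \cap [0]g = \emptyset$ in each base case as well. Combining the two cases yields $[0]\cap[0]g = \emptyset$ for all non-trivial $g$.

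There is essentially no obstacle here: the genuine content sits in Lemma \ref{backandforth} (and the ping-pong argument of Lemma \ref{Gembed} underlying it), so that the present statement is a short corollary consolidating the $len \ge 2$ estimate with the three base cases. The only point requiring any care is the base case, where one must correctly extract the images $[0]a$, $[0]b$, and $[0]b^{-1}$ from the defining tree pairs; once these are seen to lie in $[1]$, the disjointness is immediate. Via Lemma \ref{demcrit} (equivalently Corollary \ref{cor:demnode}), this in turn certifies that $G_\psi \cong \Gamma$ is a demonstrative subgroup of $V$.
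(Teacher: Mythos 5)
Your proposal is correct and follows essentially the same route as the paper: the length-one cases are handled by the same direct computations ($[0]a=[11111]$, $[0]b=[1010]$, $[0]b^{-1}=[110]$), and all cases of length at least two are dispatched by Lemma \ref{backandforth}, exactly as in the paper's proof. Your framing of disjointness via containment in $[1]$ is just a slightly more explicit phrasing of the same observation.
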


\begin{proof}
Let $\mathcal{Q}(g)$ be the statement $[0] \cap [0]g = \emptyset$, for some $g \in G_\psi$. Suppose $g\in G_\psi$ such that $len(g)=1$. There are three options, namely $a$, $b$ and $b^{-1}$. Suppose $g=a$, then $[0]g=[11111]$ and $\mathcal{Q}(g)$ holds. Suppose $g=b$, then $[0]g=[1010]$ and again $\mathcal{Q}(g)$ holds. Finally suppose $g=b^{-1}$, then $[0]g=[110]$ and $\mathcal{Q}(g)$ holds. 

For all $g \in G_\psi$ such that $len(g) \ge 2$, $\mathcal{Q}(g)$ is true by lemma \ref{backandforth}. Thus $[0] \cap [0]g = \emptyset$ for all non-trivial $g \in G_\psi$.

\end{proof}

Therefore, by lemma \ref{demcrit}, we have the following corollary.

\begin{cor}
The group $G_\psi \cong \Gamma$ is a demonstrative subgroup of $V$
\end{cor}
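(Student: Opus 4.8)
The plan is to assemble the corollary directly from the two results immediately preceding it, since all of the substantive work has already been carried out. First I would invoke Lemma~\ref{Gembed}, which establishes via the Ping-Pong Lemma that $G_\psi = \langle a, b\rangle$ factors as $\langle a\rangle \ast \langle b\rangle \cong \Gamma$. This fixes the isomorphism type of $G_\psi$, and in particular guarantees that $\langle a,b\rangle$ is a genuine copy of the modular group sitting inside $V$, rather than some proper quotient; without this step the statement ``$G_\psi \cong \Gamma$'' would not be justified.

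Next I would appeal to the lemma just proved, which asserts that $[0] \cap [0]g = \emptyset$ for every non-trivial $g \in G_\psi$. This is precisely the hypothesis demanded by the demonstrability criterion, Lemma~\ref{demcrit}: that lemma states that if $[0] \cap [0]g = \emptyset$ holds for all non-trivial $g$ in a subgroup $G \le V$, then $G$ is a demonstrative subgroup of $V$. Applying Lemma~\ref{demcrit} with $G = G_\psi$ therefore yields immediately that $G_\psi$ is demonstrative, with $0$ serving as a demonstration node (as does any node whose address has $0$ as a prefix).

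Combining these two observations completes the argument: $G_\psi$ is a subgroup of $V$, it is isomorphic to $\Gamma$, and it is demonstrative. I do not expect any real obstacle at this stage, because the difficulty has been front-loaded into the ping-pong verification of Lemma~\ref{Gembed} and the back-and-forth induction of Lemma~\ref{backandforth} that feeds the empty-intersection lemma. The corollary is simply the packaging of those facts through the criterion in Lemma~\ref{demcrit}, so the proof should consist of nothing more than citing the two preceding lemmas in sequence.
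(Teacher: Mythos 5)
Your proposal is correct and follows exactly the paper's own route: the paper derives this corollary by applying Lemma~\ref{demcrit} to the immediately preceding lemma (that $[0]\cap[0]g=\emptyset$ for all non-trivial $g\in G_\psi$), with the isomorphism $G_\psi\cong\Gamma$ already supplied by Lemma~\ref{Gembed}. Nothing is missing; your packaging of the two prior lemmas through the demonstrability criterion is precisely what the authors intend.
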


The inclusion of the countable free groups in the class of demonstrative subgroups of $V$ follows from part of Lemma 3.2 in \cite{collinolga} which is given below.

\begin{lemma}[3.2, Bleak, Salazar-D\'{i}az]
\label{demsub}
Suppose that $G$ is a demonstrative group with $m$ serving as a demonstration node. Then given any subgroup $H \le G$, $H$ is also demonstrative with demonstration node $m$.
\end{lemma}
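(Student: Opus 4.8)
The plan is to reduce everything to the demonstration-node reformulation of demonstrativity and then to observe that this condition restricts automatically to subgroups. Recall that, by the discussion following the definition of demonstrative subgroup (and Lemma~3.1 of \cite{collinolga}), saying that $m$ is a demonstration node for $G$ means precisely that
\begin{equation*}
[m] \cap [m]g = \emptyset \quad \text{for every non-trivial } g \in G.
\end{equation*}
First I would record why this node condition is equivalent to the open-set condition in the definition of demonstrativity, taking $U = [m]$. Since $V$ acts on $\cc$ on the right by homeomorphisms, applying $g_1^{-1}$ gives $[m]g_1 \cap [m]g_2 = \left([m] \cap [m]g_2g_1^{-1}\right)g_1$, so this intersection is empty if and only if $[m] \cap [m](g_2g_1^{-1}) = \emptyset$. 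Writing $g = g_2g_1^{-1}$, we have $g_1 \neq g_2$ exactly when $g \neq 1$; hence $U=[m]$ witnesses demonstrativity for $G$ if and only if $m$ is a demonstration node for $G$.

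With this in hand, the lemma is immediate. Let $H \le G$ be any subgroup and let $h \in H$ be non-trivial. Then $h$ is in particular a non-trivial element of $G$, so the node condition for $G$ yields $[m] \cap [m]h = \emptyset$. As $h$ ranged over all non-trivial elements of $H$, the node condition holds for $H$ as well, so $m$ is a demonstration node for $H$. Translating back through the equivalence of the previous paragraph, $[m]$ serves as a demonstration set for $H$, and therefore $H$ is a demonstrative subgroup of $V$ with demonstration node $m$.

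There is essentially no obstacle here: the content of the lemma is that the defining property of a demonstration node is a universally quantified statement over the non-trivial elements of the group, and such statements are inherited by subgroups without any further work. The only point requiring a moment's care is the bookkeeping in the first paragraph --- confirming that the node formulation and the open-set formulation of demonstrativity genuinely agree --- after which the restriction to $H$ is a one-line quantifier argument.
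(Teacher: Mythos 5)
Your proof is correct: the translation identity $[m]g_1 \cap [m]g_2 = \left([m] \cap [m]g_2g_1^{-1}\right)g_1$ cleanly establishes the equivalence between the demonstration-node condition and the open-set definition, and the subgroup case then follows by restricting the universal quantifier to non-trivial elements of $H$. Note that the paper itself supplies no proof of this statement --- it is imported verbatim from Lemma 3.2 of \cite{collinolga} --- but your argument is exactly the expected one, being the same reasoning the paper records (for the node $0$) in Lemma \ref{demcrit} and Corollary \ref{cor:demnode}, followed by the one-line inheritance observation.
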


The free group on two generators $F_2$ is isomorphic to the subgroup $\langle [a,b],[a,b^{-1}] \rangle \le G_\psi$ (where the bracket $[x,y]$ represents the commutator $x^{-1}y^{-1}xy$), and therefore by Lemma \ref{demsub} we have the following corollary.

\begin{cor}
\label{F2dem}
The free group on two generators, $F_2$, is in the class $\mathfrak{D}_{(V,\cc)}$
\end{cor}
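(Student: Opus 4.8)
The plan is to split Corollary~\ref{F2dem} into an algebraic part and a dynamical part, with all of the dynamics already in hand. Lemma~\ref{Gembed} identifies $G_\psi = \langle a,b\rangle$ with $\Gamma = C_2 \ast C_3$, and the immediately preceding results show that $[0]\cap[0]g = \emptyset$ for every non-trivial $g \in G_\psi$; by Lemma~\ref{demcrit} this makes $G_\psi$ demonstrative, and it exhibits the node $0$ as a demonstration node for $G_\psi$. Lemma~\ref{demsub} then guarantees that \emph{every} subgroup of $G_\psi$ is again demonstrative, with the same demonstration node $0$. Consequently, the entire statement reduces to producing a subgroup of $G_\psi$ that is isomorphic to $F_2$: demonstrativeness of that subgroup, and hence membership of $F_2$ in $\mathfrak{D}_{(V,\cc)}$, will follow for free.

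So the one substantive step is to show that $H := \langle [a,b],\,[a,b^{-1}]\rangle \le G_\psi$ is free of rank two. The route I would take is through the commutator subgroup of the modular group. Since the abelianisation of $\Gamma = C_2\ast C_3$ is $C_2\times C_3 \cong C_6$, the commutator subgroup $[\Gamma,\Gamma]$ has index $6$. It is torsion-free, because every finite-order element of a free product is conjugate into one of the factors and hence has non-trivial image in $C_6$, so no such element lies in $[\Gamma,\Gamma]$; by the Kurosh subgroup theorem a torsion-free subgroup of $C_2 \ast C_3$ is free. An Euler-characteristic count then pins down the rank: $\chi([\Gamma,\Gamma]) = 6\,\chi(\Gamma) = 6\left(\tfrac12+\tfrac13-1\right) = -1$, so $[\Gamma,\Gamma]$ is free of rank two.

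It remains to check that the two chosen commutators actually generate $[\Gamma,\Gamma]$, after which they must form a free basis, since any generating pair of a rank-two free group is a basis. I would verify this by a Reidemeister--Schreier computation with the Schreier transversal $\{1,a,b,b^2,ab,ab^2\}$: reducing modulo $a^2 = b^3 = 1$, the generators coming from the $b$-column are all trivial, and the $a$-column yields exactly four non-trivial Schreier generators, namely $bab^2a,\ b^2aba,\ abab^2,\ ab^2ab$. Writing $[a,b] = ab^2ab$ and $[a,b^{-1}] = abab^2$ in $C_2\ast C_3$ normal form, one sees that these four generators pair up as inverses of the two commutators ($bab^2a = [a,b^{-1}]^{-1}$ and $b^2aba = [a,b]^{-1}$), so $H = [\Gamma,\Gamma]$ and therefore $H \cong F_2$. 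Alternatively one can bypass the structure theory and prove freeness of $H$ directly, by a bookkeeping argument on reduced words showing that no non-trivial freely reduced word in $[a,b]^{\pm1}$ and $[a,b^{-1}]^{\pm1}$ collapses to the identity in $C_2 \ast C_3$.

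The main obstacle is exactly this last verification that the two named commutators form a free basis rather than merely lying inside the free group $[\Gamma,\Gamma]$; the Reidemeister--Schreier bookkeeping, while elementary, is where care is needed. Once $H \cong F_2$ is established, the dynamical conclusion is immediate from Lemma~\ref{demsub}, so no further work on the $V$-action is required.
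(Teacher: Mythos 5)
Your proposal is correct and takes essentially the same route as the paper: both obtain $F_2$ as the subgroup $\langle [a,b],[a,b^{-1}]\rangle \le G_\psi$ and then invoke Lemma~\ref{demsub} (subgroups inherit demonstrativeness, with the same demonstration node) to conclude membership in $\mathfrak{D}_{(V,\cc)}$. The only difference is one of detail: the paper simply cites as known the classical fact that this commutator subgroup of the modular group is free of rank two, whereas you verify it in full --- index-$6$ torsion-free subgroup, free by Kurosh, rank two by Euler characteristic, and a Reidemeister--Schreier computation (which checks out: the four non-trivial Schreier generators are exactly $[a,b]^{\pm 1}$ and $[a,b^{-1}]^{\pm 1}$) showing the two commutators generate it.
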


Virtually free groups are groups that contain a free group as a finite index subgroup. While it is known (see \cite{roverthesis,collinolga}) that if a group $G$ embeds in $V$, the any finite index over-group of $G$ also embeds into $V$, the paper \cite{fryetal} extends this powerfully with Theorem 3.3, which we paraphrase below.

\textbf{Theorem 3.3} (Berns-Zieve et al) \textit{Suppose $G$ is a group which embeds in R. Thompson's group $V$.  If $G \le H$ where $[H:G]=m$, for some $m \in \mathbb{N}$ and $G$ embeds as a demonstrative subgroup in $V$, then $H$ also embeds as demonstrative subgroup of $V$.}

The theorem tells us that $\mathfrak{D}_{(V,\cc)}$ is closed under taking finite index overgroups. As all countable free groups embed into $F_2$ and since virtually free groups are, by definition, finite index overgroups of free groups, by Corollary \ref{F2dem} countable virtually free groups are contained within $\mathfrak{D}_{(V,\cc)}$.

\subsection{Finitely generated demonstrable groups of $V$ are virtually free}

In this section we prove a partial converse of our previous result.

\begin{lemma}
\label{CFdemonst}
If $G$ is isomorphic to a demonstrative subgroup of Thompson's Group $V$ then $G$ is virtually free.
\end{lemma}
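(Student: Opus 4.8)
The plan is to connect the demonstrative property to co-word problems via formal language theory, and then invoke the Muller–Schupp classification. The key observation is that if $G$ embeds as a demonstrative subgroup of $V$, then by Corollary \ref{cor:demnode} we may assume there is a node (say $0$) with $[0]\cap[0]g=\emptyset$ for all non-trivial $g\in G$. I would aim to show that this dynamical separation property forces $coW(G)$ to be a context-free language, so that $G$ is a $\cocf$ group; but more directly, since we want to conclude *virtual freeness* (equivalently $\mathcal{CF}$, by Muller–Schupp), I would try to show that $W(G)$ itself is context-free, or equivalently produce a pushdown automaton recognizing non-triviality using the tree-pair structure of $V$.

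The concrete strategy I would pursue: fix a finite generating set of $G\le V$, each generator being a \textbf{prm} with an associated tree-pair. The demonstration node $0$ gives a clopen set $[0]$ whose translates under distinct group elements are disjoint. I would track, for a word $w=s_1s_2\cdots s_k$ in the generators, the image $[0]w$ as a finite union of cones $[u_1],\ldots,[u_r]$, and argue that this image can be computed by a machine that reads $w$ one generator at a time while maintaining bounded information together with a stack encoding the relevant prefixes. The essential point is that $w$ represents the identity in $G$ if and only if $[0]w=[0]$ (since $[0]\cap[0]g=\emptyset$ precisely detects non-triviality), so recognizing $W(G)$ reduces to tracking whether the prefix-replacement action returns $[0]$ to itself. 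The back-and-forth behaviour exploited earlier in the ping-pong argument suggests the images can grow in a controlled, stack-like manner, which is exactly what a pushdown automaton captures.

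The main obstacle I expect is making the stack bookkeeping precise: a single generator of $V$ can simultaneously lengthen some prefixes and shorten others, so the image $[0]w$ is not merely a single cone but a finite antichain of cones whose total description might grow. I would need to show that, thanks to the disjointness condition at the demonstration node, the reachable configurations are governed by a bounded-branching, stack-recognizable pattern — intuitively, that the ``boundary information'' the automaton must remember is a single stack string plus finite state, rather than an unbounded antichain. Showing this is the technical heart of the argument; one natural route is to restrict attention to the single cone in $[0]w$ that the demonstration property singles out, and prove that its address evolves as a pushdown (push/pop) process under the generators. If that reduction succeeds, then $W(G)$ is the set of words returning this address to the empty configuration, which is a deterministic context-free language.

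Once $W(G)$ (or $coW(G)$) is shown to be context-free, I would finish by citing Muller and Schupp's theorem \cite{mullerschupp83,mullerschupp85}: a finitely generated group with context-free word problem is virtually free. Combined with the closure of $\mathfrak{D}_{(V,\cc)}$ under passing to subgroups (Lemma \ref{demsub}), this yields that every demonstrative subgroup of $V$ is virtually free. I would note that some care is needed since the lemma as stated does not assume $G$ finitely generated, so I would either restrict to the finitely generated case (as in the main theorem) or argue that demonstrability plus the separation condition already constrains the structure enough to invoke the classification.
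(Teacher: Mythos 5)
Your proposal is correct and is essentially the paper's own argument: the paper likewise normalises the demonstration node to $0$ via Corollary \ref{cor:demnode}, builds a pushdown automaton whose stack stores the address of the image of that node under the word read so far (transitions being the generators' prefix-replacement rules, with acceptance exactly when the stack returns to $0\#$), and then concludes by Muller--Schupp, working in the finitely generated case just as you indicate. The ``technical heart'' you flag is resolved there in the way you suggest: the automaton never tracks a whole antichain but only a single cone's address (resolving by nondeterministic choices, noted explicitly in the paper's construction), and the demonstrative property $[0]\cap[0]g=\emptyset$ for $g\neq 1$ is precisely what makes ``stack returns to $0\#$'' equivalent to ``the word represents the identity.''
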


By the remarkable and well-known result of Muller and Schupp \cite{mullerschupp83} \cite{mullerschupp85} we know that the finitely generated virtually free groups are exactly those finitely generated groups that have a context free word problem, the $\mathcal{CF}$-groups. To prove our lemma we must for any given  finitely generated demonstrative group be able to construct a push-down automaton that accepts the word problem of that group. (See the Definition \ref{PDAdef} for the definition of a push-down automaton)

We begin with a motivating example and then generalise our method to encompass all the groups isomorphic to f.g. demonstrative subgroups of $V$. The example below is taken from \cite{collinolga} where they give a demonstrative embedding of $\mathbb{Z}$ generated by the element $g$ given below in Figure \ref{generatorz}.

	\begin{figure}[H]
		\centering \def\svgwidth{250pt}
		\input{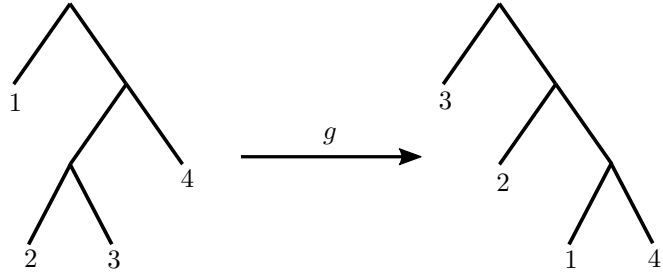}
		\caption{The generator of a demonstrative copy of $\mathbb{Z}$ inside $V$. The demonstrative node is $n=0$.}
		\label{generatorz}
	\end{figure}
	
Let $G = \langle g \rangle \cong \mathbb{Z}$ be this demonstrative subgroup of $V$. A demonstrative node for $G$ is at the address $n=0 \in \{0,1\}^\ast$. It is already known that $\mathbb{Z}$ is a $\mathcal{CF}$-group so our example gives us no new result, but the method we use to create the push-down automaton that accepts it's word problem can be generalised for every f.g. demonstrative subgroup of $V$.

Let $\mathcal{A}$ be our PDA that accepts the word problem of $G$. $\mathcal{A}$ has three states $\{q_0,q_r,q_a\}$ where $q_0$ is the start state and $q_a$ is the only accept state. Our stack alphabet is the set $\Gamma=\{\#,0,1\}$, where $\#$ is a special bottom-of-the-stack symbol. We read in strings constructed from the generator $g$ and it's inverse $g^{-1}$. The \textit{PDA} $\mathcal{A}$ is defined by the transition table given by Table \ref{TranTable} below.

\begin{table}[H]
\begin{center}
  \begin{tabular}{| c | c | c | c | c |}
    \hline
    Current State & Input & Stack Top & Stack Replacement & New State \\ \hline
    $q_0$ & $\epsilon$ & $\emptyset$ & $0\#$ & $q_a$ \\ \hline
    $q_a$ & $g$ & $0$ & $110$ & $q_r$ \\ \hline
    $q_a$ & $g^{-1}$ & $0$ & $101$ & $q_r$ \\ \hline
    $q_r$ & $g$ & $0$ & $110$ & $q_r$ \\ \hline
    $q_r$ & $g$ & $100$ & $10$ & $q_r$ \\ \hline
    $q_r$ & $g$ & $11$ & $111$ & $q_r$ \\ \hline
    $q_r$ & $g$ & $1010$ & $00$ & $q_r$ \\ \hline
    $q_r$ & $g$ & $1011$ & $01$ & $q_r$ \\ \hline
    $q_r$ & $g^{-1}$ & $0$ & $101$ & $q_r$ \\ \hline
    $q_r$ & $g^{-1}$ & $10$ & $100$ & $q_r$ \\ \hline
    $q_r$ & $g^{-1}$ & $111$ & $11$ & $q_r$ \\ \hline
    $q_r$ & $g^{-1}$ & $1100$ & $00$ & $q_r$ \\ \hline
    $q_r$ & $g^{-1}$ & $1101$ & $01$ & $q_r$ \\ \hline
    $q_r$ & $g$ & $101\#$ & $0\#$ & $q_a$ \\ \hline
    $q_r$ & $g^{-1}$ & $110\#$ & $0\#$ & $q_a$ \\ \hline
  \end{tabular}
\end{center}
\caption{The transition table of the automaton accepting the word problem of $G \cong \mathbb{Z}$}
\label{TranTable}
\end{table}

We also provide a visual representation of $\mathcal{A}$ in Figure \ref{zautomata}.
	
	\begin{figure}[h]
		\centering \def\svgwidth{350pt}
		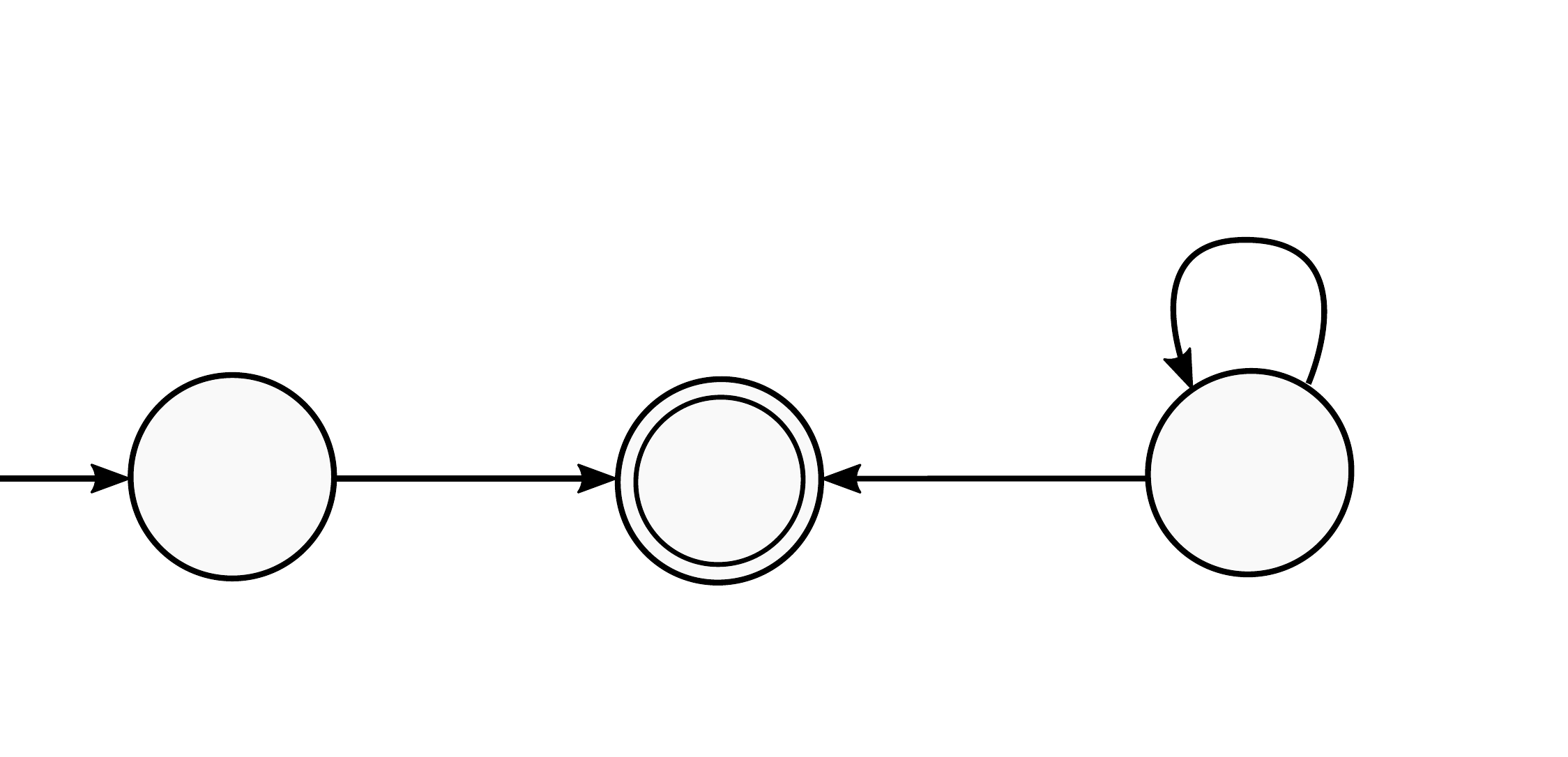
		\caption{A graphical representation of the automata $\mathcal{A}$ that accepts the word problem of $G$.}
		\label{zautomata}
\end{figure}
	
The automata models the action of the generators on the demonstrative node $0$.  The stack will represent the location of the demonstrative node under the action of the word read so far.  Whenever the automaton $\mathcal{A}$ processes a letter, it amends the stack according to the prefix replacement rules defined for the elements of $V$ representing the letters $g$ and $g^{-1}$.  At all times, the top of the stack represents the beginning of the address in $\mathcal{T}_2$ of the node to which the previously processed word has moved the demonstration node $n=0$.

 The automaton $\mathcal{A}$ begins with active state $q_0$ by loading the stack with the address of the demonstrative node $n=0$, and moving the active state to the accept state $q_a$.  Note that none of the input string is read at this time.   Whenever the active state is $q_a$, if $\mathcal{A}$ has finished reading the input then it accepts the word.  However, if the active state is $q_a$ and there are still more letters to be read then $\mathcal{A}$ will process the next letter (which action will move the active state to $q_r$ and modify the stack according to the prefix replacement rules).  From the state $q_r$ there are circumstances which allow the active state to return to $q_a$.  Namely, whenever the active state is $q_r$ and $\mathcal{A}$ processes a letter and the resultant stack is ``$0\#$'', then the active state transitions to $q_a$.  

 By the definition of demonstration nodes, a demonstration node under the action of an element $w$ of the demonstrative group is taken to itself if and only if $w$ is the identity element.  By construction, our automata has stack ``$0\#$'' only when the previously processed word represents the trivial element.  However, this is precisely at the times that the automaton's active state is $q_a$.

We now generalise this method into a proof of Lemma \ref{CFdemonst}.

\begin{proof}

Suppose $\widehat{G}$ is a finitely generated, demonstrable group, isomorphic to a demonstrative subgroup $G$ of Thompson's Group $V$ where $G$ will be generated by elements $\{g_1,g_2,\ldots ,g_m\}$. Suppose  $n \in \{0,1\}^\ast$ is a demonstrative node for $G$ in $\mathcal{T}_2$. Note that by Corollary \ref{cor:demnode} we can always find $G$ such that $n=0$. We describe and construct our automaton $\mathcal{A}$ below.

Let $X = \{g_1,g_2,\ldots,g_m\} \cup \{g_1^{-1},g_2^{-1},\ldots,g_m^{-1}\}$, the union of the set of generators of $G$ and their inverses, be the input alphabet.  Set $\Gamma = \{\#,0,1\}$ to be the stack alphabet. The new automaton $\mathcal{A}$ will also have three states $q_0$, $q_a$ and $q_r$, where $q_a$ is the automaton's only accept state.  We will describe the transitions from each of these states.

\textbf{Transitions from $q_0$} The automaton $\mathcal{A}$ begins in the state $q_0$.  That state admits one transition, which loads the stack with the string $n\# \in \Gamma^\ast$, and transfers active state to the state $q_a$, without reading any of the input.  After this transition, the stack will contain the address of the demonstration node $n$ and the bottom-of-the-stack symbol, with $n$ written from top to bottom on the stack. For example if $n=100$ then $1$ would be at the top of the stack followed by two $0$'s and finally $\#$. We call this the \textit{loading phase}. 

\textbf{Transitions from $q_a$} After the loading phase, $\mathcal{A}$ enters the \textit{reading phase}, where it begins to read the input string from $X^\ast$.  Observe that the current stack is precisely ``$n\#$,'' and this will be true whenever $q_a$ is the active state, by construction.  From $q_a$, there are transitions to the state $q_r,$ defined as follows.  The transitions will be given by tuplets of the form \textit{(input letter, current top-of-stack, top-of-stack re-write)}:

\begin{enumerate}
\item Input letter: $g\in X$.
\item Current top of stack: the string ``$n$''.
\item Stack re-write: The result of applying the prefix replacement determined by the element of $V$ that the symbol $g$ represents, to the string $n$.
\end{enumerate}

(Note: we will only list transitions of $\mathcal{A}$ which can actually arise.  E.g., in our previous example we do not list transitions from $q_a$ with the top-of-stack beginning with a ``$1$''.)  

\textbf{Transitions from $q_r$} All transitions from $q_r$ take the active state to either $q_r$ or to $q_a$.   There is a finite list of pairs $(stack, g)$ for $stack$ representing the full stack, including the $\#$ symbol, and $g$ a letter of our input alphabet, so that the result of applying the prefix replacement determined by the element $\alpha_g\in V$ representing the input letter $g$ to the whole stack is ``$n\#$.''  For such pairs, we add transitions as given by the tuple $(g, stack, n\#)$, which transitions move the active state to $q_a$.  

We now discuss the transitions from $q_r$ to $q_r$.  For each $g$ in our input alphabet, there is a set $\{s_1,s_2,\ldots,s_j\}$ of minimal prefixes which determine the element $\alpha_g$ of $V$ corresponding to the letter $g$ as a prefix replacement map, where we define the corresponding set of strings $\{t_1,t_2,\ldots,t_j\}$ which are the replacement strings, so that $s_i\cdot \alpha_g = t_i$, for $1\leq i\leq j$ indices.  We add transitions given by the tuples $(g,s_i,t_i)$ from $q_r$ to $q_r$.

Note that the non-determinism above allows poor choices that can result in the active state being $q_r$ at the end of reading the input, even though the stack will actually read ``$n\#$,''  however, there will still be a path through the automaton which would have ended at $q_a$ for this input (literally, we could simply change the last choice taken).  In our example, we removed the non-determinism by using the tuples $(g,1010,00),$ $(g,1011,01),$ $(g^{-1},1100,00),$ $(g^{-1},1101,01)$ instead of using the two tuples $(g,101,0), (g^{-1},110,0)$ that our process above produces.  Note that we could force determinism in this general construction by using various carefully selected transitions, as in our example, but we felt our approach here was clearer.

\textbf{Termination.} When we reach the end of the input string, if we are in $q_a,$ then the stack must be $n\#$ by construction, and the input string is equivalent to the identity in our group (by the definition of a demonstrative embedding). If the active state at end of input is $q_r$ then the word is rejected.   If this happens, it means that either the stack is not ``$n\#$,'' and so the element did not act as the identity, or if the stack is ``$n\#$,'' then the automaton made poor choices in the face of non-determinism.  Thus $\mathcal{A}$ accepts the word problem of $G$ and rejects all other strings.

\end{proof}

\bibliographystyle{plain}

\end{document}